\let\frak\mathfrak
\def\>{\relax\ifmmode\mskip.666667\thinmuskip\relax\else\kern.111111em\fi}
\def\<{\relax\ifmmode\mskip-.333333\thinmuskip\relax\else\kern-.0555556em\fi}
\def\vsk#1>{\vskip#1\baselineskip}
\def\vv#1>{\vadjust{\vsk#1>}\ignorespaces}
\def\vvn#1>{\vadjust{\nobreak\vsk#1>\nobreak}\ignorespaces}
  \let\ssize\scriptstyle
\let\sssize\scriptscriptstyle
\let\Medskip\medskip
\def\medskip{\par\Medskip}
\let\Bigskip\bigskip
\def\bigskip{\par\Bigskip}
\let\Maketitle\maketitle
\def\maketitle{\Maketitle\thispagestyle{empty}\let\maketitle\empty}
\newtheorem{thm}{Theorem}[section]
\newtheorem{cor}[thm]{Corollary}
\newtheorem{lem}[thm]{Lemma}
\numberwithin{equation}{section}
\theoremstyle{definition}
\newtheorem*{rem}{Remark}
\let\mc\mathcal
\let\nc\newcommand
\let\al\alpha
\let\ka\kappa
\let\la\lambda
\let\phi\varphi
\let\si\sigma
\let\der\partial
\let\leq\leqslant
\let\on\operatorname
\let\bi\bibitem
\let\bs\boldsymbol
\def\C{{\mathbb C}}
\def\Z{{\mathbb Z}}
\def\F{{\mc F}}
\def\+#1{^{\{#1\}}}
\def\End{\on{End}}
\def\gln{\mathfrak{gl}_N}
\def\beq{\begin{equation}}
\def\eeq{\end{equation}}
\def\be{\begin{equation*}}
\def\ee{\end{equation*}}
\nc{\bea}{\begin{eqnarray*}}
\nc{\eea}{\end{eqnarray*}}
\nc{\bean}{\begin{eqnarray}}
\nc{\eean}{\end{eqnarray}}
\nc{\Ref}[1]{{\rm(\ref{#1})}}
\let\ga\gamma
\let\Ga\Gamma
\nc{\Il}{{\mc I_{\bs\la}}}
\nc{\bla}{{\bs\la}}
\nc{\Fla}{\F_\bla}
\nc{\tfl}{{T^*\Fla}}
\nc{\GL}{{GL_n(\C)}}
\nc{\GLC}{{GL_n(\C)\times\C^*}}
\let\sd s 
\def\ddk_#1{\kk_{#1}\<\>\frac\der{\der\<\>\kk_{#1}}}
\def\bul{\mathbin{\raise.2ex\hbox{$\sssize\bullet$}}}
\def\intt{\mathchoice
{\mathop{\raise.2ex\rlap{$\,\,\ssize\backslash$}{\intop}}\nolimits}
{\mathop{\raise.3ex\rlap{$\,\sssize\backslash$}{\intop}}\nolimits}
{\mathop{\raise.1ex\rlap{$\sssize\>\backslash$}{\intop}}\nolimits}
{\mathop{\rlap{$\sssize\<\>\backslash$}{\intop}}\nolimits}}
\let\kk q 
\let\cc c
\let\Ko K
\def\GZ/{Gelfand-Zetlin}
\def\KZ/{{\slshape KZ\/}}
\def\qKZ/{{\slshape qKZ\/}}
\def\XXX/{{\slshape XXX\/}}
\nc{\A}{{\mc C}}
\gdef\){\>\]}
\gdef\){\RIfM@\mskip.333333\thinmuskip\relax\else\kern.0555556em\fi}
\gdef\]{{\!\!\;}}
\def\fd/{fin\-ite-dim\-en\-sion\-al}
\def\glk{{$\frak{gl}_k$}}
\def\gkmod/{\$\glk$-module}
\def\gnmod/{\$\gln$-module}
\def\Id{\on{Id}}
\begin{document}

\hrule width0pt
\vsk->

\title[On axioms of Frobenius like structure in the theory of arrangements]
{On axioms of Frobenius like structure in \\  the theory of arrangements}

\author
[Alexander Varchenko ]
{ Alexander Varchenko$\>^{\star}$}

\maketitle

\begin{center}
{\it Department of Mathematics, University of North Carolina
at Chapel Hill\\ Chapel Hill, NC 27599-3250, USA\/}
\end{center}

{\let\thefootnote\relax
\footnotetext{\vsk-.8>\noindent
$^\star$\,{\it E-mail}: anv@email.unc.edu,  supported in part by NSF grant DMS-1362924 and Simons Foundation grant \#336826.}}

\medskip

\begin{abstract}

A Frobenius manifold is a manifold with a flat metric and a Frobenius algebra structure on  tangent spaces 
at points of the manifold such that the structure constants of multiplication are given by third derivatives
of a potential function on the manifold with respect to  flat coordinates.

In this paper we present a modification of that notion coming from the theory of arrangements of hyperplanes.
Namely, given natural numbers $n>k$, we have a flat $n$-dimensional
manifold and a vector space $V$ with a nondegenerate symmetric bilinear form and an
algebra structure on $V$, depending on points of the manifold, such that the structure constants of  multiplication are given
by $2k+1$-st derivatives of a potential function on the manifold with respect to  flat coordinates.
We call such a structure a {\it Frobenius like structure}.   Such a structure arises when one has a family of arrangements of $n$ affine hyperplanes
in $\C^k$ depending on parameters so that the hyperplanes move parallely to themselves when the parameters
change. In that case a Frobenius like structure arises on the base $\C^n$ of the family.

\end{abstract}

\section{Introduction}
The theory of Frobenius manifolds has multiple connections with other branches of mathematics,
such as quantum cohomology, singularity theory, the theory of integrable systems, see, for example,
\cite{D1, D2, M}. 
The notion of a Frobenius manifold was introduced by B.\,Dubrovin in \cite{D1},   see also
 \cite{D2, M, FV,  St},  where numerous variants of this notion are discussed.  In all alterations, a
 Frobenius manifold is a manifold with a flat metric and a Frobenius algebra structure on  tangent spaces 
at points of the manifold such that the structure constants of multiplication are given by third derivatives
of a potential function on the manifold with respect to  flat coordinates.

In this paper we present a modification of that notion coming from the theory of arrangements of hyperplanes.
Namely, given natural numbers $n>k$, we have a flat $n$-dimensional
manifold and a vector space $V$ with a nondegenerate symmetric bilinear form and an
algebra structure on $V$, depending on points of the manifold, such that the structure constants of  multiplication are given
by $2k+1$-st derivatives of a potential function on the manifold with respect to  flat coordinates.
We call such a structure a {\it Frobenius like structure}.   Such a structure arises when one has a family of arrangements of $n$ affine hyperplanes
in $\C^k$ depending on parameters so that the hyperplanes move parallely to themselves when the parameters
change. In that case a Frobenius like structure arises on the base $\C^n$ of the family.
On such families see, for example,  \cite{MS, SV}.

\smallskip
In Section \ref{sec D Frob},  we give Dubrovin's definition of almost Frobenius structure. In Section \ref{sec Frob like}, we give the definition of
Frobenius like structure motivated by Dubrovin's definition and results of \cite{V4}. The definition consists of eight axioms: 
 \ref{Inv ax}, \ref{unit ax}, \ref{Comm}, \ref{pP ax}, \ref{ass ax}, \ref{Homogeneity axiom}, \ref{Lifting axiom}, \ref{Axiom e}.
 In Section \ref{sec ex}, we consider a family of arrangements and construct a Frobenius like structure on its base.


\smallskip
The paper had been written while the author visited the MPI in Bonn in 2015-2016.
The author thanks the MPI for hospitality, Yu.I.\,Manin for interest in this work, 
P.\,Dunin-Barkowski, B.\,Dubrovin, and  C. Hertling for useful discussions.

\section{Axioms of an almost Frobenius structure, \cite{D2}}
\label{sec D Frob}

An {\it almost Frobenius structure} of charge $d\ne 1$ on a manifold $M$ is
a structure of a Frobenius algebra on the tangent spaces $T_zM  = ( *_z , ( \,,\, )_z)$, where $z\in M$.
 It satisfies the following axioms.
 
 \smallskip
 \noindent
 {\it  Axiom 1.} The metric $(\, ,\, )_z$ is flat.
 
  \smallskip
 \noindent
 {\it  Axiom 2.} 
 In the flat coordinates $z_1,\dots,z_n$ for the metric, the structure constants
 of the multiplication
 \bean
 \label{D mult}
 \frac \der{\der z_i}*_z\frac \der{\der z_j} = N^k_{i,j}(z) \frac \der{\der z_k}
 \eean
 can be represented in the form
 \bean
 \label{D pot}
 N^k_{i,j}(z) = G^{k,l}\frac{\der^3 L}{\der z_l\der z_i\der z_j}(z)
\eean
for some function  $L(z)$.  Here  $(G^{i,j})$ is the matrix inverse to the matrix  
$(G_{i,j}:=(\frac\der{\der z_i},\frac \der{\der z_j}))$.

  \smallskip
 \noindent
 {\it  Axiom 3.} 
The function $L(z)$ satisfies the homogeneity equation
\bean
\label{D homoge}
\sum_{j=1}^nz_j\frac{\der L}{\der z_j}(z)= 2 L(z) + \frac 1{1-d} \sum_{i,j} G_{i,j}z_iz_j.
\eean

  \smallskip
 \noindent
 {\it  Axiom 4.}  The Euler vector field
 \bean
 \label{Euler}
 E=\frac {1-d}2\sum_{j=1}^n z_j\frac\der{\der z_j}
 \eean
 is the unit of the Frobenius algebra.
 Notice that $(E,E) = \frac{(1-d)^2}4 \sum_{i,j}G_{i,j}z_iz_j$.

\smallskip
 \noindent
 {\it  Axiom 5.}  There exists a vector field
 \bean
 \label{ D vect e}
 e=\sum_{j=1}^n e_j(z)\frac\der{\der z_j},
 \eean
  an invertible 
 element of the Frobenius algebra $T_zM$ for every $z\in M$,
such that  the operator $g(z) \mapsto (e g)(z)$ 
acts by the shift  $\ka\mapsto \ka-1$ on the space of
solutions of the system of equations
 \bean
\label{D flat section}
\frac{\der ^2 g}{\der z_i\der z_j}=\ka \sum_{l=1}^n N^l_{i,j} \frac{\der g}{\der z_l},
\qquad i,j=1,\dots,n.
\eean
 The solutions of that system are called {\it twisted periods},  \cite{D2}.
 
 \subsection{Example in Section 5.2 of \cite{D2}} Let 
 \bean
 \label{D Pot}
 F(z_1,\dots,z_n) = \frac {n}4\sum_{i<j}(z_i-z_j)^2 \log (z_i-z_j)
 \eean
 and let the metric be $(\frac \der{\der z_i},\frac \der{\der z_j})=\delta_{i,j}$.
 The third derivatives of $F(z)$ give the multiplication law of tangent vectors
$\der_i:=\frac\der{\der z_i}$:
\bean
\label{D E mult}
\der_i *_z\der_ j
=
 -\frac{n}2\frac{\der_i-\der_j}{z_i-z_j} \quad 
 \on{for}\ i\ne j,
\qquad
\der_i *_z\der _i
=
-\sum_{j\ne i}\der_i *_z\der_ j.
\eean
The unit element of the algebra is the Euler vector field 
\bean
\label{D EE}
E=\frac 1{n} \sum_{j=1}^n z_j\der_j .
\eean
The operator of multiplication in the algebra by the element $\sum_{j=1}^n\frac\der{\der z_j}$ equals zero. Factorizing over this direction
one obtains an almost Frobenius structure on $\{ z_1+\dots+z_n=0\}-\cup_{i<j}\{z_i=z_j\}$.  

The vector field $e$ is
\bean
\label{D ee}
e=-\sum_{j=1}^n \frac 1{f'(z_j)}\der_j, \qquad
f(t):=\prod_{i=1}^n(t-z_i).
\eean
Equations \Ref{D flat section} take the form
\bean
\label{D E flat section}
\der_i\der_j g = -\frac \kappa{z_i-z_j} (\der_ig-\der_jg), \qquad i\ne j
\eean
with solutions 
\bean
\label{D solution}
g(z_1,\dots,z_n) = \int \prod_{i=1}^n (t-z_i)^\ka dt.
\eean

\section{Axioms of a Frobenius like structure}
\label{sec Frob like}

\subsection{Objects}
\label{sec obj}

Let  an $n\times k$-matrix $(b^j_i)^{j=1,\dots,k}_{i=1,\dots,n}$ be given with $n>k$.  
For $i_1,\dots,i_k\in [1,\dots,n]$, denote
\bean
\label{def d}
d_{i_1,\dots,i_k} = \det (b^j_{i_l})_{j,l=1,\dots,k}.
\eean
We have  $d_{\si i_1,\dots,\si i_k} = (-1)^\si d_{i_1,\dots,i_k}$ for $\si\in S_k$.
We assume that $d_{i_1,\dots,i_k}\ne 0$ for all distinct $i_1,\dots,i_k$.
\begin{rem}

The numbers $(d_{i_1,\dots,i_k})$ satisfy  the Pl\"ucker relations.
For example, for arbitrary $j_1$, \dots, $j_{k+1}$, $i_2$,
\dots, $i_{k}\in [1,\dots,n]$,
we have
\bean
\label{Pl rel}
\sum_{m=1}^{k+1} (-1)^{m+1} d_{j_1,\dots,\widehat{j_m},\dots,j_{k+1}} d_{j_m,i_2,\dots,i_{k}} =0.
\eean

\end{rem}

We assume that  an ${n-1\choose k}$-dimensional complex vector space $V$ 
is given with a non-degenerate symmetric bilinear form $S(\,,\,)$.

\smallskip

We assume that for every $k$-element unordered subset $\{i_1,\dots,i_k\}\subset [1,\dots,n]$ 
a nonzero vector $P_{i_1,\dots,i_k}\in V$ is given such that the vectors $\{P_{i_1,\dots,i_k}\}$ 
generate $V$ as a vector space and the only linear relations among them are
\bean
\label{pp rel}
\sum_{j=1}^n d_{j,i_2,\dots,i_k} P_{j, i_2,\dots,i_k}=0,
\qquad 
\forall i_2,\dots,i_k\in [1,\dots,n].
\eean

\begin{lem}
For any $j\in[1,\dots,n]$, the vectors $\{P_{i_1,\dots,i_k}\ |\  \{i_1,\dots,i_k\}\subset [1,\dots,n]-\{j\}\}$ form a basis of $V$.
\qed
\end{lem}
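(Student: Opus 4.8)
The plan is to count dimensions and then show that the proposed spanning set is linearly independent, since a spanning set of size equal to $\dim V$ must be a basis. Fix $j\in[1,\dots,n]$ and set $W = \{i_1,\dots,i_k\}\subset[1,\dots,n]-\{j\}$, ranging over all $k$-element subsets avoiding $j$; there are exactly $\binom{n-1}{k}=\dim V$ such vectors $P_{i_1,\dots,i_k}$. Since the full collection $\{P_{i_1,\dots,i_k}\}$ generates $V$, it suffices to show first that the subcollection avoiding $j$ still spans, and second that it is linearly independent.

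For spanning, I would argue that every $P_{i_1,\dots,i_k}$ with $j\in\{i_1,\dots,i_k\}$ lies in the span of those avoiding $j$. Suppose $j = i_1$ (using antisymmetry in the indices this loses no generality up to sign). Apply relation \Ref{pp rel} with $i_2,\dots,i_k$ equal to the remaining indices: $\sum_{m=1}^n d_{m,i_2,\dots,i_k}P_{m,i_2,\dots,i_k}=0$. Here the coefficient of $P_{j,i_2,\dots,i_k}$ is $d_{j,i_2,\dots,i_k}$, which is nonzero by the standing assumption that $d$ is nonzero on distinct indices, and $i_2,\dots,i_k$ are all distinct from $j$. Solving for $P_{j,i_2,\dots,i_k}$ expresses it as a linear combination of the vectors $P_{m,i_2,\dots,i_k}$ with $m\ne j$; each such term either avoids $j$ already or has $j$ among $i_2,\dots,i_k$, but in the latter case the coefficient $d_{m,i_2,\dots,i_k}$ vanishes (repeated index), so in fact only the $j$-avoiding terms survive. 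Hence $P_{j,i_2,\dots,i_k}\in\operatorname{span}\{P_W : j\notin W\}$, and the $j$-avoiding vectors span $V$.

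For linear independence: we now have $\binom{n-1}{k}$ vectors spanning a space of dimension $\binom{n-1}{k}$, so they are automatically a basis — no separate independence check is needed. The only thing to verify is that these are genuinely distinct vectors (indexed by distinct unordered subsets) and that the count $\binom{n-1}{k}$ matches $\dim V$, which is given. So the argument reduces to the spanning claim of the previous paragraph.

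The main obstacle — really the only subtlety — is bookkeeping in the relation \Ref{pp rel}: one must be careful that when $j$ appears among the "fixed" indices $i_2,\dots,i_k$ of a relation, the corresponding Plücker coordinate $d$ vanishes so those terms drop out, and that antisymmetry lets us always move $j$ into the first slot. A cleaner alternative is a rank argument: the relations \Ref{pp rel} span the full space of relations among the $\binom{n}{k}$ generators $P_{i_1,\dots,i_k}$, and this relation space has dimension $\binom{n}{k}-\binom{n-1}{k}=\binom{n-1}{k-1}$; one checks that restricting to generators avoiding $j$ kills exactly the independent relations, leaving the $j$-avoiding generators free, hence a basis. Either route is short; I would present the elimination argument above as the most self-contained.
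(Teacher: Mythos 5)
Your argument is correct: the elimination step via \Ref{pp rel} (solving for $P_{j,i_2,\dots,i_k}$ using that $d_{j,i_2,\dots,i_k}\ne 0$ and that the terms with a repeated index drop out) shows the $j$-avoiding vectors span, and a spanning family of cardinality $\binom{n-1}{k}=\dim V$ is automatically a basis. The paper omits the proof entirely (the lemma is stated with a \qed), and what you wrote is exactly the intended routine verification, so there is nothing to compare beyond noting that your write-up supplies the omitted details.
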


We assume that a ({\it potential}) function
 $L(z_1,\dots, z_n)$ is given.

\smallskip

We assume that nonzero vectors  $p_1(z_1,\dots, z_n),\dots,p_n(z_1,\dots, z_n) \in V$ depending on $z_1,\dots,z_n$
 are given  such that for any $i_2,\dots,i_{k}\in [1,\dots,n]$, we have
\bean
\label{p rel}
\sum_{j=1}^n d_{j,i_2,\dots,i_{k}} p_j(z_1,\dots, z_n) =0.
\eean

\subsection{Axioms}
\subsubsection{Invariance axiom } 
\label{Inv ax}

We assume that for any $i_2,\dots, i_{k}\in[1,\dots,n]$, we have
\bean
\label{L rel 1}
\sum_{j=1}^n d_{j, i_2,\dots,i_{k}}\frac{\der L}{\der z_j} =0.
\eean

Given $z=(z_1,\dots,z_n)$, for $j=1,\dots,n$, define a linear operator $p_j(z)*_z : V\to V$ by the formula
\bean
\label{mult}
S(p_j(z)*_z P_{i_1,\dots,i_k}, P_{j_{1},\dots,j_{k}}) = \frac{\der}{\der z_j} 
 \frac{\der^{2k}L}{\der z_{i_1}\dots\der z_{i_k}\der z_{j_1}\dots\der z_{j_{k}}}(z).
\eean

\begin{lem}
\label{lem corr}
The operators $p_j(z)*_z$ are well-defined.
\end{lem}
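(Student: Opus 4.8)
The plan is to show that formula (2.3) consistently defines $p_j(z)*_z$ as an operator on $V$, i.e. that it respects all the defining relations among the generators $P_{i_1,\dots,i_k}$. Since the $\{P_{i_1,\dots,i_k}\}$ span $V$ and $S$ is nondegenerate, an operator $A\colon V\to V$ is uniquely and well-definedly specified by prescribing the numbers $S(A P_{i_1,\dots,i_k}, P_{j_1,\dots,j_k})$ for all $k$-subsets, provided two compatibility conditions hold: first, the prescribed bilinear expression must be symmetric in the two arguments (so that it is genuinely a value of the form $S(A\cdot,\cdot)$ with no constraint beyond symmetry on $A$ — actually we only need well-definedness of $A$, so symmetry is not strictly required, but we do need the relations respected in \emph{both} slots); and second, whenever $\sum_\alpha c_\alpha P_{\alpha} = 0$ is one of the relations (2.2), we must have $\sum_\alpha c_\alpha\, S(A P_\alpha, P_\beta) = 0$ for every $\beta$. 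By the description of the relation lattice in Section 2.1, it suffices to check this for the Plücker-type relations (2.2): for fixed $i_2,\dots,i_k$ and fixed $\{j_1,\dots,j_k\}$,
\beq
\sum_{i_1=1}^n d_{i_1,i_2,\dots,i_k}\, S\bigl(p_j(z)*_z P_{i_1,i_2,\dots,i_k},\, P_{j_1,\dots,j_k}\bigr) = 0 .
\eeq

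First I would substitute the definition (2.3) into the left-hand side, obtaining
\beq
\sum_{i_1=1}^n d_{i_1,i_2,\dots,i_k}\,\frac{\der}{\der z_j}\,
\frac{\der^{2k}L}{\der z_{i_1}\cdots\der z_{i_k}\der z_{j_1}\cdots\der z_{j_k}}(z)
= \frac{\der^{2k+1}}{\der z_j\,\der z_{i_2}\cdots\der z_{i_k}\,\der z_{j_1}\cdots\der z_{j_k}}
\left( \sum_{i_1=1}^n d_{i_1,i_2,\dots,i_k}\,\frac{\der L}{\der z_{i_1}} \right).
\eeq
The inner sum vanishes identically by the Invariance Axiom (2.1) (relation (L rel 1)), so the whole expression is zero. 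This settles the relation in the first argument slot. For the second slot one runs the identical computation with the roles of the $i$'s and $j$'s exchanged, again using (2.1): the relevant generator relations (2.2) are again the Plücker relations, and differentiating $L$ enough times kills $\sum_j d_{j,j_2,\dots,j_k}\partial L/\partial z_j = 0$. Finally, I would note that the symmetry of the right-hand side of (2.3) under simultaneously swapping the multiset $\{i_1,\dots,i_k\}$ with $\{j_1,\dots,j_k\}$ (clear since partial derivatives commute) is exactly what is needed to ensure that the operator obtained by fixing the first argument agrees with the transpose-type construction, so there is no ambiguity; together with nondegeneracy of $S$, this gives a unique linear operator $p_j(z)*_z$.

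The only genuine subtlety — and the step I would treat most carefully — is the claim that \emph{every} linear relation among the $P_{i_1,\dots,i_k}$ is a consequence of the displayed relations (2.2), so that checking (2.2) suffices. This is exactly the hypothesis imposed in Section 2.1 ("the only linear relations among them are (2.2)"), so strictly speaking it is assumed rather than proved; what remains is the bookkeeping that the span of the relations (2.2), as $i_2,\dots,i_k$ range over all indices, is closed, i.e. that a linear combination of relations (2.2) that happens to annihilate a given $P_\beta$ is again captured. In practice this is immediate: we are checking a linear functional $P_\alpha \mapsto S(p_j(z)*_z P_\alpha, P_\beta)$ vanishes on the relation subspace, and since that subspace is spanned by the vectors of coefficients appearing in (2.2), it is enough — and I have done it above — to verify vanishing on each spanning vector. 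Hence the operator is well defined, completing the proof of Lemma 3.2.
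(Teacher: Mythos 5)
Your argument is essentially the paper's: substitute each defining linear relation into \Ref{mult} and observe that the resulting combination of derivatives of $L$ vanishes because the invariance axiom \Ref{L rel 1} lets you pull the sum $\sum_{i_1} d_{i_1,i_2,\dots,i_k}\,\der L/\der z_{i_1}=0$ inside the remaining derivatives. The two checks you carry out --- the relations \Ref{pp rel} in the first and in the second slot of $S$ --- are correct, and your framing (nondegeneracy of $S$ plus the hypothesis that \Ref{pp rel} exhausts the relations among the $P_{i_1,\dots,i_k}$ reduces well-definedness to verifying these spanning relations) is the right one; the digression about symmetry is unnecessary, as you yourself note.

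What you omit, and what the paper explicitly includes under ``well-defined,'' is compatibility with the relations \Ref{p rel} among the vectors $p_1(z),\dots,p_n(z)$ themselves: one must also check that $\sum_{j} d_{j,i_2,\dots,i_k}\,\bigl(p_j(z)*_z\bigr)=0$ as an operator whenever $\sum_{j} d_{j,i_2,\dots,i_k}\,p_j(z)=0$ in $V$, so that the operator attached to the index $j$ really depends only on the vector $p_j(z)$ (this is also needed later for the product \Ref{Mult} to make sense). The verification is the same one line you already used, with the roles of the indices shifted: $\sum_{j} d_{j,i_2,\dots,i_k}\,S\bigl(p_j(z)*_zP_\al,P_\beta\bigr)$ equals the $2k$-th derivative of $\sum_{j} d_{j,i_2,\dots,i_k}\,\der L/\der z_j$, which vanishes by \Ref{L rel 1}. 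With that third check added, your proof coincides with the paper's.
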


\begin{proof}
We need to check that if 
we substitute in \Ref{mult} a relation of the form 
\Ref{pp rel}  
instead of $P_{i_1,\dots,i_k}$ or $ P_{j_{1},\dots,j_{k}}$  or
if 
we substitute in \Ref{mult} a relation of the form \Ref{p rel} instead of $p_j(z)$, then the right-hand side of
\Ref{mult} is zero, but that follows from assumption \Ref{L rel 1}.
\end{proof}

\begin{lem}
\label{lem sym}
The operators $p_j(z)*_z$ are symmetric, $S(p_j(z)*_z v, w) = S(v,p_j(z)*_zw)$ for any $v,w\in V$.
\qed
\end{lem}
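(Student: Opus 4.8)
The plan is to reduce the claimed symmetry to the elementary symmetry of iterated partial derivatives, together with the fact—guaranteed by the standing assumptions on the vectors $P_{i_1,\dots,i_k}$—that these vectors span $V$. Since Lemma \ref{lem corr} already tells us that each $p_j(z)*_z$ is a well-defined linear operator on $V$, it suffices to verify the identity $S(p_j(z)*_z v,w)=S(v,p_j(z)*_z w)$ on a spanning set and then extend by bilinearity.

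First I would observe that the right-hand side of the defining formula \Ref{mult} is manifestly invariant under interchanging the multi-index $i_1,\dots,i_k$ with the multi-index $j_1,\dots,j_k$: the expression $\dfrac{\der}{\der z_j}\dfrac{\der^{2k}L}{\der z_{i_1}\cdots\der z_{i_k}\der z_{j_1}\cdots\der z_{j_k}}$ depends only on the unordered collection of differentiations performed, so it is unchanged when the two blocks are swapped. Hence, for all $k$-element subsets $\{i_1,\dots,i_k\}$ and $\{j_1,\dots,j_k\}$ of $[1,\dots,n]$,
\[
S(p_j(z)*_z P_{i_1,\dots,i_k},\,P_{j_1,\dots,j_k})=S(p_j(z)*_z P_{j_1,\dots,j_k},\,P_{i_1,\dots,i_k}).
\]

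Next I would promote this to arbitrary vectors. Because $\{P_{i_1,\dots,i_k}\}$ generates $V$ and $p_j(z)*_z$ is linear, bilinearity of $S$ upgrades the displayed identity to $S(p_j(z)*_z v,w)=S(p_j(z)*_z w,v)$ for all $v,w\in V$. Finally, applying the symmetry of the form $S$ to the right-hand side gives $S(p_j(z)*_z v,w)=S(v,p_j(z)*_z w)$, which is the assertion.

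I do not expect a genuine obstacle here: the argument is essentially a two-line consequence of Clairaut's theorem on mixed partials. The only point requiring a moment's care is the logical order—one must first check the symmetry on the spanning family $\{P_{i_1,\dots,i_k}\}$, where \Ref{mult} applies verbatim, and only then extend by bilinearity; this extension is legitimate precisely because Lemma \ref{lem corr} has already established that $p_j(z)*_z$ is well defined on all of $V$.
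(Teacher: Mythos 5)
Your argument is correct and is precisely the intended one: the paper states this lemma without proof (it is immediate from the definition), and the justification is exactly the symmetry of the right-hand side of \Ref{mult} under exchanging the two blocks of indices (equality of mixed partials), combined with the symmetry of $S$ and extension by bilinearity from the spanning set $\{P_{i_1,\dots,i_k}\}$. Your care about the logical order (verify on the spanning family, then extend, which is legitimate because Lemma \ref{lem corr} guarantees well-definedness) is exactly right.
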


Choose a basis $\{P_\al\ |\ \al\in \mc I\}$  of $V$ among the vectors $\{P_{i_1,\dots,i_k}\}$.
Here each $\al$ is an unordered $k$-element subset $\{i_1,\dots,i_k\}$
of $[1,\dots,n]$ and $|\mc I|={n-1\choose k}$. Denote $\frac {\der^k}{\der^k z_\al} := \frac {\der^k}{\der z_{i_1}\dots\der z_{i_k}}$. 
Then \Ref{mult} can be written as
\bean
\label{mult basis}
S(p_j*_z P_\al, P_\beta) = \frac{\der}{\der z_j} 
 \frac{\der^{2k}L}{\der^k z_{\al}\der^k z_{\beta}}(z),
 \qquad \al,\beta \in \mc I.
 \eean
Denote $S_{\al,\beta} = S(P_\al,P_\beta)$. Let $(S^{\al,\beta})_{\al,\beta\in \mc I}$ be the matrix inverse to the matrix 
$(S_{\al,\beta})_{\al,\beta\in \mc I}$.
Introduce the matrix  $(M_{j,\al}^\ga)_{\al,\ga\in\mc I}$ of the operator $p_j(z)*_z$ by the formula
$p_j(z)*_z P_\al =\sum_{\ga\in\mc I} M^\ga_{j,\al}(z)P_\ga$. Then
\bean
\label{M j al ga}
M^\ga_{j,\al} =  \sum_{\beta\in\mc I}
 \frac{\der^{2k+1}L}{\der z_j\der^k z_{\al}\der^k z_{\beta}} S^{\beta, \ga} .
\eean

\subsubsection{Unit element axiom}
\label{unit ax}

We assume that for some $a\in\C^\times$, we have
\bean
\label{identity}
\frac 1a \sum_{j=1}^n z_j p_j(z)*_z = \Id \  \in \End(V),
\eean
that is, $\sum_{j=1}^n z_j M^\ga_{j,\al}(z) \,= \,a\,\delta_{\al}^\ga$.

\subsubsection{Commutativity axiom}
\label{Comm}
We assume that the operators $p_1(z)*_z,\dots,p_n(z)*_z$ commute.
In other words,  
\bean
\sum_{\beta} [M_{i,\al}^\beta M^\ga_{j,\beta}
- M_{j,\al}^\beta M^\ga_{i,\beta}]=0.
\eean
This is a quadratic relation between the $(2k+1)$-st derivatives of the potential.

\subsubsection{ Relation between $p_j(z)$ and $P_{i_1,\dots,i_k}$ axiom}
\label{pP ax}

 We assume that we have
\bean
\label{p=P}
p_{i_1}(z)*_z \dots *_z p_{i_k}(z)  = P_{i_1,\dots,i_k},  
\eean
for every $z$ and all unordered $k$-element subsets $\{i_1,\dots,i_k\}\subset [1,\dots,n]$.  

More precisely,
let $p_j(z)= \sum_{\al\in\mc I} C^\al_j(z) P_\al$, $j=1,\dots,n$, be the expansion 
of the vector $p_j(z)$ with respect to the basis $\{P_\al\}_{\al\in\mc I}$. Then
equation \Ref{p=P} is an equation on the coefficients $C^\al_{i_k}(z)$ and  $M_{i_m, \al}^\beta(z)$,
where $ \al,\beta\in\mc I$, $m=i_1,\dots,i_{k-1}$.
For example, if $P_{i_1,\dots,i_k}$ is one of the basis vectors $(P_\al)_{\al\in\mc I}$, then
\bean
\sum_{\al_2,\dots,\al_k\in\mc I} C^{\al_k}_{i_k} (z)M^{\al_{k-1}}_{i_{k-1},\al_k}(z)
M^{\al_{k-2}}_{i_{k-2},\al_{k-1}} (z)\dots
M^{\al_{1}}_{i_{1},\al_2}(z) = \delta^{\al_1}_{i_1,\dots,i_k}.
\eean

\begin{rem}
If $k=1$, then this axiom says that $p_j=P_j$ for $j\in[1,\dots,n]$. In this case the commutativity axiom follows from
\Ref{mult}.
\end{rem}

Define the multiplication on $V$ by the formula
\bean
\label{Mult}
P_{i_1,\dots,i_k}*_z P_{j_1,\dots,j_k} = p_{i_1}(z)*_zp_{i_2}(z)*_z\dots *_zp_{i_k}(z)*_z P_{j_1,\dots,j_k}.
\eean
The multiplication is well-defined due to relations \Ref{pp rel}, \Ref{p rel},  \Ref{L rel 1}. 

\begin{lem}
The multiplication on $V$ is commutative.

\end{lem}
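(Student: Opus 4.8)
The plan is to show that the product $P_{i_1,\dots,i_k}*_z P_{j_1,\dots,j_k}$ is unchanged when the two $k$-element subsets $\{i_1,\dots,i_k\}$ and $\{j_1,\dots,j_k\}$ are swapped. First I would use the defining formula \Ref{Mult} together with axiom \ref{pP ax}, equation \Ref{p=P}, to rewrite the right-hand side of \Ref{Mult} entirely in terms of the commuting operators $p_m(z)*_z$: namely
\beq
\label{plan1}
P_{i_1,\dots,i_k}*_z P_{j_1,\dots,j_k}
= p_{i_1}(z)*_z\dots*_z p_{i_k}(z)*_z p_{j_1}(z)*_z\dots*_z p_{j_k}(z)*_z \mathbf{1},
\eeq
where I am applying \Ref{p=P} to expand $P_{j_1,\dots,j_k}$ as $p_{j_1}(z)*_z\dots*_z p_{j_k}(z)$ acting on the unit $\mathbf{1}$ of the algebra (the unit being $\frac1a\sum_j z_j p_j(z)$ by axiom \ref{unit ax}). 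The point of \Ref{plan1} is that it exhibits the product as a composition of the $2k$ operators $p_{i_1}(z)*_z,\dots,p_{i_k}(z)*_z,p_{j_1}(z)*_z,\dots,p_{j_k}(z)*_z$ applied to a fixed vector.

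The second step is to invoke the commutativity axiom \ref{Comm}: the operators $p_1(z)*_z,\dots,p_n(z)*_z$ pairwise commute, so the composition in \Ref{plan1} is independent of the order of the $2k$ factors. In particular, moving the block $p_{j_1}(z)*_z\dots*_z p_{j_k}(z)*_z$ to the front and the block $p_{i_1}(z)*_z\dots*_z p_{i_k}(z)*_z$ to the back gives exactly $p_{j_1}(z)*_z\dots*_z p_{j_k}(z)*_z p_{i_1}(z)*_z\dots*_z p_{i_k}(z)*_z \mathbf{1}$, which by the same reasoning as in step one equals $P_{j_1,\dots,j_k}*_z P_{i_1,\dots,i_k}$. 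Hence $P_{i_1,\dots,i_k}*_z P_{j_1,\dots,j_k} = P_{j_1,\dots,j_k}*_z P_{i_1,\dots,i_k}$, and since the vectors $\{P_{i_1,\dots,i_k}\}$ span $V$ and the multiplication is bilinear, commutativity follows on all of $V$.

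The main point requiring care — and the step I expect to be the real content — is justifying that \Ref{Mult} may legitimately be rewritten as \Ref{plan1}, i.e. that $P_{j_1,\dots,j_k}$ equals $p_{j_1}(z)*_z\dots*_z p_{j_k}(z)$ \emph{as an operator applied to the unit}, not merely as a vector. This is where axiom \ref{pP ax} and the unit axiom \ref{unit ax} combine: \Ref{p=P} gives $P_{j_1,\dots,j_k}=p_{j_1}(z)*_z\dots*_z p_{j_k}(z)$ as an element of $V$, and then applying the operator $P_{j_1,\dots,j_k}*_z -$ amounts, by \Ref{Mult}, to applying $p_{j_1}(z)*_z\dots*_z p_{j_k}(z)*_z -$; evaluating at the unit $\mathbf{1}=\frac1a\sum_j z_j p_j(z)$ recovers the vector $P_{j_1,\dots,j_k}$ itself via \Ref{identity}. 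One should also note that all these manipulations are compatible with the linear relations \Ref{pp rel}, \Ref{p rel}, \Ref{L rel 1}, which is precisely the well-definedness of \Ref{Mult} already recorded after that display, so no new consistency check is needed. Thus the proof is short once the translation between ``vectors'' and ``operators on the unit'' is made precise.
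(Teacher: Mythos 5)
Your overall strategy---rewrite both products as the composition of the $2k$ commuting operators $p_{i_1}(z)*_z,\dots,p_{i_k}(z)*_z,p_{j_1}(z)*_z,\dots,p_{j_k}(z)*_z$ applied to one fixed terminal vector, then permute the blocks---is sound and genuinely different from the paper's proof, which argues directly in two cases (index sets overlapping or disjoint) using only the commutativity axiom \ref{Comm} and the fact that $P_{i_1,\dots,i_k}$ is unordered. But your reduction to the common terminal vector $1_z=\frac1a\sum_m z_m\,p_m(z)$ has a gap at exactly the step you flag as the real content. Axiom \ref{unit ax}, equation \Ref{identity}, says that the \emph{operator} $\frac1a\sum_m z_m\,p_m(z)*_z$ is the identity in $\End(V)$; applied to the vector $P_{j_1,\dots,j_k}$ it yields $\frac1a\sum_m z_m\,p_m(z)*_zP_{j_1,\dots,j_k}=P_{j_1,\dots,j_k}$. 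What you actually need is the different statement $P_{j_1,\dots,j_k}*_z1_z=P_{j_1,\dots,j_k}$, which by \Ref{Mult} unwinds to $\frac1a\sum_m z_m\,p_{j_1}(z)*_z\dots*_zp_{j_k}(z)*_zp_m(z)=P_{j_1,\dots,j_k}$, where now the $p_{j_l}(z)*_z$ are the operators and $p_m(z)$ is the terminal vector. These two expressions are not equated by \Ref{identity}: passing from one to the other requires $p_{j_1}*_z\dots*_zp_{j_k}*_zp_m=p_m*_z\bigl(p_{j_1}*_z\dots*_zp_{j_k}\bigr)$, i.e.\ swapping the terminal vector into the operator string, and axiom \ref{Comm} only permutes operators among themselves, never an operator with the vector it ultimately acts on. In effect you are assuming that $1_z$ is a right unit for $*_z$, which in the paper is only asserted in the theorem \emph{after} commutativity and associativity have been established.

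The gap is fillable, but the filling is precisely the paper's trick. For $m\notin\{j_1,\dots,j_k\}$, use that the subset in \Ref{p=P} is unordered: $p_{j_2}*_z\dots*_zp_{j_k}*_zp_m=P_{j_2,\dots,j_k,m}=P_{m,j_2,\dots,j_k}=p_m*_zp_{j_2}*_z\dots*_zp_{j_{k-1}}*_zp_{j_k}$, after which \ref{Comm} pulls $p_m(z)*_z$ to the front and \Ref{p=P} collapses the rest to $p_m(z)*_zP_{j_1,\dots,j_k}$; for $m\in\{j_1,\dots,j_k\}$ operator commutativity alone suffices. Summing against $\frac1a z_m$ and invoking \Ref{identity} then gives $P_{j_1,\dots,j_k}*_z1_z=P_{j_1,\dots,j_k}$. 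This terminal-vector swap via the symmetry of $P$ in \Ref{p=P} is exactly the manipulation the paper performs directly in its disjoint-index case (replacing $P_{i_2,\dots,i_k,j_k}$ by $P_{i_2,\dots,j_k,i_k}$), so the detour through the unit element does not avoid the key step---it relocates it into an unproved auxiliary claim. Once that claim is proved, your argument does go through, and as a small bonus it establishes along the way that $1_z$ is a unit for the multiplication \Ref{Mult}, which the paper's minimal proof (not using axiom \ref{unit ax} at all) does not.
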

\begin{proof}
 Indeed,
if the subsets $\{i_1,\dots,i_k\}$ and $\{j_1,\dots,j_k\}$ have a nonempty intersection, say $i_k=j_k$, then
\bea
&&
P_{i_1,\dots,i_{k-1}, i_k}*_z P_{j_1,\dots,j_{k-1},i_k} = (p_{i_1}*_z\dots *_z p_{i_{k-1}})*_zp_{i_k}*_z 
(p_{j_1}*_z \dots *_z p_{j_{k-1}})*_z p_{i_k}
= 
\\
&&
=(p_{j_1}*_z \dots *_z p_{j_{k-1}})*_z
  p_{i_k}*_z (p_{i_1}*_z\dots *_zp_{i_{k-1}})*_z  p_{i_k} =P_{j_1,\dots,j_{k-1}, i_k}*_z P_{i_1,\dots,i_{k-1},i_k}.
\eea
If the subsets $\{i_1,\dots,i_k\}$ and $\{j_1,\dots,j_k\}$ do not intersect, then
\bea
&&
P_{i_1,\dots,i_k}*_z P_{j_1,\dots,j_{k}} = p_{j_1}*_z \dots *_z p_{j_{k-1}}*_z 
p_{i_1} *_z(p_{i_2} \dots*_zp_{i_{k-1}} *_z p_{i_{k}}*_z p_{j_k})
= 
\\
&&
=p_{j_1}*_z \dots *_z p_{j_{k-1}}*_z p_{i_1} *_z P_{i_2,\dots,i_{k-1},i_{k},j_k}
=p_{j_1}*_z \dots *_z p_{j_{k-1}}*_z p_{i_1} *_z P_{i_2,\dots,i_{k-1},j_{k},i_k}=
\\
&&
=P_{j_1,\dots,j_k}*_z P_{i_1,\dots,i_{k}} .
\eea
\end{proof}

For $\al,\beta\in\mc I$, let  $P_\al*_zP_\beta = \sum_{\ga\in\mc I} N_{\al,\beta}^\ga(z) P_\ga$. If $\al=\{i_1,\dots,i_k\}$
then 
\bean
\label{N coeff}
N_{\al,\beta}^\ga(z) = \sum_{\beta_2,\dots,\beta_k\in\mc I}M^{\beta_k}_{i_k,\beta}(z)
M^{\beta_{k-1}}_{i_{k-1},\beta_{k}}(z)\dots M^{\beta_{2}}_{i_{2},\beta_{3}}(z)M^{\ga}_{i_{1},\beta_{2}}(z).
\eean
This is a polynomial of degree $k$ in the $(2k+1)$-st derivatives of the potential.

\subsubsection{Associativity axiom}
\label{ass ax}
 We assume that the multiplication on $V$ is associative, that is,
\bean\label{ass N}
\sum_{\ga\in\mc I}
[ N_{\al_2,\al_3}^\ga N_{\al_1,\ga}^\beta - N_{\al_1,\al_2}^\ga N_{\ga,\al_3}^\beta]=0.
\eean

\begin{thm}

For any $z$, the axioms \ref{Inv ax}, \ref{unit ax}, \ref{Comm}, \ref{pP ax}, \ref{ass ax} define on 
the vector space $V$ the structure $*_z$
of a commutative associative algebra with unit element $1_z= \frac 1a \sum_{j=1}^n z_j p_j(z)$.
The algebra is Frobenius, $S(u*_zv,w)= S(u, v*_zw)$, for all $u,v,w\in V$.
The elements $p_1(z),\dots,p_n(z)$ generate $(V, *_z)$ as an algebra.
\qed
\end{thm}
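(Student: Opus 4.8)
The plan is to assemble the theorem from the axioms essentially by unwinding the definitions already set up in Section~\ref{sec Frob like}. First I would observe that the operators $p_j(z)*_z\colon V\to V$ are well defined (Lemma~\ref{lem corr}) and symmetric with respect to $S$ (Lemma~\ref{lem sym}), and that by the commutativity axiom~\ref{Comm} they pairwise commute. Hence the subalgebra of $\End(V)$ they generate is a commutative, associative algebra of symmetric operators. The multiplication $*_z$ on $V$ defined in \Ref{Mult} is then obtained by letting $P_{i_1,\dots,i_k}$ act as the product operator $p_{i_1}(z)*_z\cdots *_z p_{i_k}(z)$ applied to a second vector; well-definedness in the first slot follows from axiom~\ref{pP ax} together with relations \Ref{pp rel}, \Ref{p rel}, \Ref{L rel 1} (this is exactly the remark after \Ref{Mult}), and well-definedness in the second slot is Lemma~\ref{lem corr}. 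Commutativity of $*_z$ is the Lemma proved just before the associativity axiom.

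Next I would record that $1_z:=\frac1a\sum_{j=1}^n z_j p_j(z)$ is a unit. By the unit axiom~\ref{unit ax}, the operator $\frac1a\sum_j z_j\,p_j(z)*_z$ equals $\Id_V$. For any $v\in V$ we therefore have $1_z*_z v = \bigl(\tfrac1a\sum_j z_j\,p_j(z)*_z\bigr)v = v$; combined with commutativity this shows $1_z$ is a two-sided identity for $*_z$. Associativity of $*_z$ is precisely axiom~\ref{ass ax}, written out in coordinates in \Ref{ass N}: one checks that $(P_{\al_1}*_zP_{\al_2})*_zP_{\al_3}=P_{\al_1}*_z(P_{\al_2}*_zP_{\al_3})$ is equivalent to \Ref{ass N}, and the general associativity law then follows by bilinearity since the $P_\al$, $\al\in\mc I$, form a basis of $V$. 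I would also note the compatibility of the two product operations: for $\al=\{i_1,\dots,i_k\}$ one has $P_\al*_z v = p_{i_1}(z)*_z\cdots*_z p_{i_k}(z)*_z v$ for all $v$, which is \Ref{N coeff}; in particular $p_j(z)*_z$ as an operator on $V$ agrees with multiplication by the element $p_j(z)\in(V,*_z)$, so the notation is consistent.

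For the Frobenius property $S(u*_zv,w)=S(u,v*_zw)$ I would reduce to basis vectors $u=P_{\al}$, $v=P_{i_1,\dots,i_k}$, $w=P_\ga$ and use that each $p_{i_m}(z)*_z$ is $S$-symmetric (Lemma~\ref{lem sym}) together with the fact that these operators commute: moving the string $p_{i_1}(z)*_z\cdots*_z p_{i_k}(z)$ across the form one factor at a time converts $S(P_\al*_zP_{i_1,\dots,i_k},P_\ga)$ into $S(P_\al,P_{i_1,\dots,i_k}*_zP_\ga)$, and bilinearity extends this to all $u,v,w$. Finally, the elements $p_1(z),\dots,p_n(z)$ generate $(V,*_z)$ because their $*_z$-products of length $k$ already include all the vectors $P_{i_1,\dots,i_k}$ by axiom~\ref{pP ax}, and those span $V$ by assumption.

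The main obstacle is the bookkeeping of well-definedness: one must be careful that \Ref{Mult} does not depend on which representatives of the relations \Ref{pp rel}, \Ref{p rel} are chosen, in \emph{either} argument, and that passing between the three equivalent descriptions of the product --- as the defining formula \Ref{mult}, as the operators $M^\ga_{j,\al}$ in \Ref{M j al ga}, and as the structure constants $N^\ga_{\al,\beta}$ in \Ref{N coeff} --- is done consistently. Once these identifications are in place the algebraic axioms translate directly into the stated properties, and no genuinely new computation beyond \Ref{ass N}, \Ref{identity}, and the symmetry lemma is needed.
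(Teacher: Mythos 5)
Your proposal is correct and takes essentially the approach the paper intends: the theorem is stated with no written proof (just \qed), being regarded as a direct consequence of axioms \ref{Inv ax}--\ref{ass ax} and the two lemmas on well-definedness and commutativity, and your argument is precisely that unwinding. The one point worth keeping explicit in your write-up is the consistency check you flag at the end --- that the operator $p_j(z)*_z$ of \Ref{mult} coincides with multiplication by the element $p_j(z)$ in $(V,*_z)$, which is what makes the unit axiom \Ref{identity} yield $1_z*_zv=v$ --- but this follows from axiom \ref{pP ax} together with the commutativity of the operators, as you indicate.
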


\subsubsection{Homogeneity axiom}
\label{Homogeneity axiom}
We assume that 
\bea
\sum_{j=1}^n z_j\frac\der{\der z_j} L \ =\  2k L \ +\   \frac{a^{2k+1}}{(2k)!}\, S(1_z,1_z),
\eea
where $a\in\C^\times$ is the same number as in the unit element axiom.

\subsubsection{Lifting axiom}
\label{Lifting axiom}
For $\ka\in\C^\times$, define on the trivial bundle $V\times \C^n\to\C^n$
a connection $\nabla^\ka$ by the formula:
\bean
\label{connection}
\nabla^\ka_j = \frac{\der}{\der z_j}\  -\ \ka\, p_j(z)*_z, \qquad  j=1,\dots,n.
\eean

\begin{lem}
\label{lem flat}
For any $\ka\in\C^\times$, the connection $\nabla^\ka$ is flat, that is, $[\nabla^\ka_i , \nabla^\ka_j]=0$ for all $i,j$. 
\end{lem}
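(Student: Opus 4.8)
The plan is to verify flatness of $\nabla^\ka$ by computing the curvature $[\nabla^\ka_i,\nabla^\ka_j]$ directly from the definition \Ref{connection} and showing that each contribution vanishes by an axiom already established. Writing $A_j$ for the operator $p_j(z)*_z$ with matrix $(M^\ga_{j,\al})$, we have
\bea
[\nabla^\ka_i,\nabla^\ka_j] &=& \Bigl[\frac\der{\der z_i}-\ka A_i,\ \frac\der{\der z_j}-\ka A_j\Bigr]
\\
&=& -\ka\Bigl(\frac{\der A_j}{\der z_i}-\frac{\der A_i}{\der z_j}\Bigr) + \ka^2\,[A_i,A_j].
\eea
So flatness for all $\ka\in\C^\times$ is equivalent to the two identities
\bea
[A_i,A_j]=0 \qquad\text{and}\qquad \frac{\der A_j}{\der z_i}=\frac{\der A_i}{\der z_j}
\eea
holding separately (they must hold independently since the expression is a polynomial in $\ka$ that vanishes identically). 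The first identity is precisely the commutativity axiom \ref{Comm}. The second is the ``symmetry of the connection coefficients'' statement, and it is where the work lies.

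First I would observe that the second identity is in fact automatic from the potential representation \Ref{M j al ga}. Indeed, from
\bea
M^\ga_{j,\al} = \sum_{\beta\in\mc I}\frac{\der^{2k+1}L}{\der z_j\,\der^k z_\al\,\der^k z_\beta}\,S^{\beta,\ga},
\eea
and since the matrix $(S^{\beta,\ga})$ has constant entries, we get
\bea
\frac{\der}{\der z_i}M^\ga_{j,\al} = \sum_{\beta\in\mc I}\frac{\der^{2k+2}L}{\der z_i\,\der z_j\,\der^k z_\al\,\der^k z_\beta}\,S^{\beta,\ga},
\eea
which is manifestly symmetric in $i$ and $j$ because it is an iterated partial derivative of the single function $L$. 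Hence $\frac{\der A_j}{\der z_i}=\frac{\der A_i}{\der z_j}$ as operators on the basis $\{P_\al\}_{\al\in\mc I}$, and therefore on all of $V$.

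Combining the two observations: $[\nabla^\ka_i,\nabla^\ka_j] = \ka^2[A_i,A_j] - \ka\bigl(\frac{\der A_j}{\der z_i}-\frac{\der A_i}{\der z_j}\bigr) = 0$, the first term vanishing by axiom \ref{Comm} and the second by the symmetry of the third-order mixed partials of $L$. This holds for every $\ka\in\C^\times$, which is the claim. I expect the only mild subtlety to be bookkeeping: checking that the operator identity may safely be verified on the chosen basis $\{P_\al\}_{\al\in\mc I}$ (it may, since these span $V$), and making sure that the $\ka$-independent and $\ka^1$ and $\ka^2$ pieces are genuinely the separate obstructions — i.e. that flatness for all nonzero $\ka$ forces each coefficient of the quadratic in $\ka$ to vanish, which is immediate. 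There is no hard analytic or combinatorial obstacle here; the Pl\"ucker relations and the relations \Ref{pp rel}, \Ref{p rel} played their role already in making the operators $A_j$ well defined (Lemma \ref{lem corr}), so they do not need to be revisited.
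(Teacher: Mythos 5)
Your proof is correct and follows essentially the same route as the paper: flatness is reduced to the commutativity axiom \ref{Comm} together with the symmetry $\frac{\der}{\der z_i}(p_j(z)*_z)=\frac{\der}{\der z_j}(p_i(z)*_z)$, the latter read off from formula \Ref{M j al ga} via equality of mixed partials of $L$ (note only that these are $(2k+2)$-nd, not third, derivatives). You have merely spelled out the $\ka$-degree bookkeeping that the paper leaves implicit.
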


\begin{proof}  The flatness for any $\ka$ is equivalent to the commutativity of the operators $p_i(z)*_z$, $p_j(z)*_z$
and the identity $\frac \der{\der z_j}( p_i(z)*_z) = \frac \der{\der z_i} (p_j(z)*_z)$. The commutativity  is our axiom \ref{Comm}
and the identity
follows from formula \Ref{M j al ga}.
\end{proof}

A flat section $s(z_1,\dots,z_n)\in V$ of the connection $\nabla^\ka$ is a solution of the
system of equations
\bean
\label{ka flat}
\frac{\der s}{\der z_j}\  -\ \ka\, p_j(z)*_z\, s=0,
\qquad
j=1,\dots,n.
\eean
We say that a flat section is {\it liftable}  if it has  the form
\bean
\label{spec sol}
s=\sum_{1\leq i_1<\dots<i_k\leq n} \frac {\der^k g}{\der z_{i_1}\dots\der z_{i_k}} \,d^2_{i_1,\dots,i_k}P_{i_1,\dots,i_k}
\eean
for a scalar function $g(z_1,\dots,z_n)$. If a section is liftable, the function $g(z)$ is not unique (an arbitrary polynomial
of degree less than $k$ can be added). Such a function $g$ will be called a {\it twisted period}.

We assume that for generic $\ka$ all flat sections of $\nabla^\ka$ are liftable and the set of all 
exceptional $\ka$ is contained in  a finite union of arithmetic progressions in $\C$.

\subsubsection{Differential operator $e$ axiom} 
\label{Axiom e}

 We assume that there exists a differential operator 
\bean
e=\sum_{1\leq i_1<\dots<i_k\leq n} e_{i_1,\dots,i_k}(z)\frac{\der^k}{\der z_{i_1}\dots\der z_{i_k}}
\eean
such that the operator
$g(z) \mapsto (e g)(z)$ acts as the shift $\ka\mapsto \ka-1$ on the space of solutions
of equations \Ref{ka flat}  and \Ref{spec sol}.  We also require that the element
\bean
\tilde e=\sum_{1\leq i_1<\dots<i_k\leq n} e_{i_1,\dots,i_k}(z)P_{i_1,\dots,i_k}\ \in\ V
\eean
is invertible for every $z$.

Notice that the operator $e$ annihilates all polynomials in $z$ of degree less than $k$.

\subsection{Remark}
The axioms of an almost Frobenius structure in Section \ref{sec D Frob} are similar to the axioms 
of a Frobenius like structure in Section \ref{sec Frob like} for $k=1$. 
The role of the tangent bundle $TM$ and the vectors $\frac \der{\der z_j}$
 in Section \ref{sec D Frob} is played by the trivial bundle $V\times \C^n\to\C^n$ and
 the vectors $p_j(z)\in V$ as well as the vectors $\frac \der{\der z_j}\in T\C^n$ in Section \ref{sec Frob like}.

\section{An example of a Frobenius like structure}
\label{sec ex}

\subsection{ Arrangements of hyperplanes.}
\label{arr}

 Consider a family of arrangements 
$\A(z) = \{H_i(z)\}_{i=1,\dots,n}$
of $n$ affine hyperplanes in $\C^k$. The arrangements of the family depend on parameters $z=(z_1,\dots, z_n)$.
When the parameters change, each hyperplane moves parallelly to itself.
More precisely, let $t=(t_1,\dots,t_k)$ be coordinates on $\C^k$.  For $i=1,\dots,n$, the hyperplane $H_i(z)$ is defined by the equation
\bea
f_i(t,z):= \sum_{j=1}^k b^j_it_j+z_i = 0 ,  \qquad \on{where} \ b^j_i\in\C  \  \on{are\ given}.
\eea
When $z_i$ changes the hyperplane $H_i(z)$ moves parallelly to itself.

We assume that every $k$ hyperplanes intersect transversally,  that is, for every distinct $i_1,\dots,i_k$, we assume that
\bea
d_{i_1,\dots,i_k}:={\det}_{l,j=1}^k (b^j_{i_l}) \ne 0.
\eea
For every $k+1$ distinct indices $i_1,\dots,i_{k+1}$, the corresponding hyperplanes have nonempty intersection if and only if
\bean
\label{f k+1}
f_{i_1,\dots,i_{k+1}}(z):= \sum_{l=1}^{k+1} (-1)^{l-1} d_{i_1,\dots,\widehat{i_l},\dots,i_{k+1}}z_{i_l}=0.
\eean

For any $1\leq i_1<\dots<i_{k+1}\leq n$ denote by $H_{i_1\dots,i_{k+1}}$ the hyperplane in $\C^n$ defined by the equation
$f_{i_1,\dots, i_{k+1}}(z)=0$.  The union
 $\Delta=\cup_{1\leq i_1<\dots<i_{k+1}\leq n} H_{i_1,\dots, i_{k+1}} \subset \C^n$ is called the {\it discriminant}.
We will consider only $z\in\C^n-\Delta$. In that case the arrangement  $\A(z)=\{H_i(z)\}_{i=1,\dots, n}$ has normal crossings.

\subsection{ Master function.} Fix complex numbers $a_1,\dots,a_n\in\C^\times$ such that $|a|:=\sum_{j=1}^na_j\ne 0$. 
The {\it master function} is the function
\bea
\Phi(t,z) = \sum_{j=1}^n a_j \log f_j(t,z) .
\eea
For fixed $z$, the master function is defined on the complement to our arrangement 
$\A(z)$ as a multivalued holomorphic function,
 whose derivatives are rational functions.
In particular, $\frac{\der \Phi}{\der z_i} = \frac {a_i}{f_i}$.
For fixed $z$, define the {\it critical set}
\bea
C_z: =\left\{t \in \C^k-\cup_{j=1}^n H_j(z) \ \Big|\ \frac{\der \Phi}{\der t_j}(t,z)=0,\  j=1,\dots,k \right\}
\eea
and the algebra of functions on the critical set
\bea
A_z: = \C(\C^k-\cup_{j=1}^n H_j(z))\Big/\left\langle \frac{\der \Phi}{\der t_j}(t,z),\  j=1,\dots,k\right\rangle ,
\eea
where $\C(\C^k-\cup_{j=1}^n H_j(z))$ is the algebra of regular functions on $\C^k-\cup_{j=1}^n H_j(z)$.
It is known that $\dim A_z = {n-1\choose k}$.  For example this follows from Theorem 2.4 and Lemmas 4.1, 4.2 in \cite{V3}.

Denote by $*_z$ the operation of multiplication in $A_z$.

The  {\it Grothendieck bilinear form} on $A_z$ is the form
\bea
(g,h)_z := \frac{1}{(2\pi i)^k}\int_{\Gamma}
\frac{gh\ dt_1\wedge\dots\wedge dt_k}{\prod_{j=1}^k \frac{\der \Phi}{\der t_j}}\quad
\on{for}\ g,h\in A_z.
\eea
Here  $\Gamma =\{t\ |\ |\frac{\der \Phi}{\der t_j}|=\epsilon,\ j=1,\dots,k\}$,
where  $\epsilon >0$ is  small. The cycle $\Ga$
is oriented 
 by the condition $d\arg \frac{\der \Phi}{\der t_1}\wedge\dots\wedge d\arg \frac{\der \Phi}{\der t_k} > 0$,
The pair $\big(A_z, (\,, \,)_z\big)$ is a Frobenius algebra.

\subsection{Algebra $A_z$}
For $i=1,\dots,n$, denote
\bean
\label{arr p}
p_i(z)= \Big[\frac{\der \Phi}{\der z_i}(t,z)\Big] = \Big[\frac{a_i}{f_i(t,z)}\Big]\ \in \ A_z,
\qquad i=1,\dots,n.
\eean
For any $i_1,\dots,i_k\in [1,\dots,n]$, denote
\bea
\label{P and w}
P_{i_1,\dots,i_k} := p_{i_1}(z)*_z\dots *_zp_{i_k}(z) \, \in  A_z\ \on{and}
\
w_{i_1,\dots,i_k}: = d_{i_1,\dots,i_k} p_{i_1}(z)*_z\dots *_z p_{i_k}(z) \,  \in  A_z.
\eea

\begin{thm} 
\label{thm p w ()}

${}$

\begin{enumerate}

\item[(i)]
 The elements $p_1(z),\dots, p_n(z)$ generate $A_z$ as an algebra and for any $i_2,\dots,i_k$ we have
 \bean
 \label{arr p rel}
 \sum_{j=1}^n d_{j, i_2,\dots,i_k} p_j(z)=0 .
 \eean
  
  \item[(ii)] The element 
  \bean
  \label{1 in A}
  1_z:=\frac1{|a|}\sum_{j=1}^nz_jp_j(z)
  \eean
   is the unit element of the algebra $A_z$.

\item[(iii)]
 The set of all  elements $w_{i_1,\dots,i_k}$ generates $A_z$ as a vector space.
The only linear relations between these elements are
\bean
\label{IS}
&&
w_{i_1,\dots,i_k} = (-1)^\si w_{\si i_1,\dots,\si i_k},   \quad \forall \si \in S_k,
\\
\notag
&&
\sum_{j=1}^n w_{j, i_2,\dots,i_k} = 0,\quad \qquad \forall\   i_2,\dots,i_k,
\eean
in particular, the linear relations between these elements do not depend on $z$.
\item[(iv)]
 The Grothendieck form has the following matrix elements:
\bean
\label{Mat elts}
&&
(w_{i_1,\dots,i_k}, w_{j_1,\dots,j_k})_z = 0, \qquad \on{if}\
|\{i_1,\dots,i_k\}\cap \{j_1,\dots,j_k\}|< k-1,
\\
&&
\notag
(w_{i_1,\dots,i_k}, w_{i_1,\dots,i_{k-1},i_{k+1}})_z =  \frac {(-1)^{k+1}}{|a|}\prod_{l=1}^{k+1} a_{i_l}
\quad\on{if} \ i_1,\dots,i_{k+1}\  \on{are\ distinct},
\\
\notag
&&
(w_{i_1,\dots,i_k}, w_{i_1,\dots,i_{k}})_z = \frac {(-1)^{k}}{|a|}\prod_{l=1}^k a_{i_l} \sum_{m\notin \{i_1,\dots,i_k\}} a_m 
\quad\on{if} \ i_1,\dots,i_{k}\  \on{are\ distinct}
\eean
\end{enumerate}
\end{thm}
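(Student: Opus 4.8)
## Proof proposal for Theorem~\ref{thm p w ()}

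The plan is to treat the algebra $A_z$ as the Jacobian (Milnor) algebra of the master function restricted to the complement of the arrangement, and to recognize it as the Orlik--Solomon-type algebra attached to $\A(z)$. All four parts will follow from a concrete presentation of $A_z$ obtained by introducing the coordinates $u_i := f_i(t,z)$ and writing the critical point equations in those coordinates.

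\emph{Part (i).} First I would compute, from $\Phi = \sum_j a_j\log f_j$ and $f_i = \sum_j b^j_i t_j + z_i$, that $\frac{\der\Phi}{\der t_l} = \sum_{i=1}^n b^l_i\, \frac{a_i}{f_i} = \sum_{i=1}^n b^l_i\, p_i(z)$ in $A_z$. Since $A_z$ is the quotient of the regular functions on $\C^k - \cup H_j(z)$ by these $k$ relations, and since the functions $a_i/f_i$ generate that ring of regular functions (the $f_i$ are affine-linear and the $1/f_i$ with partial-fraction decompositions span everything), the classes $p_1(z),\dots,p_n(z)$ generate $A_z$ as an algebra. The relations $\sum_{l} b^l_i\, p_i = 0$, $l=1,\dots,k$, are exactly the critical point equations; to extract from them the claimed relations $\sum_{j=1}^n d_{j,i_2,\dots,i_k}\, p_j = 0$ for each choice of $i_2,\dots,i_k$, I would fix $i_2,\dots,i_k$, form the $k\times k$ minors of the matrix $(b^l_i)$ using rows $i_2,\dots,i_k$ together with a varying row $j$, and take the appropriate linear combination (over $l$) of the critical equations; Cramer's rule / cofactor expansion turns $\sum_l b^l_j\,p_j$-type sums into $\sum_j d_{j,i_2,\dots,i_k}\, p_j$. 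This is the combinatorial heart of part (i) and is a finite linear-algebra identity among the $b^j_i$. That these are \emph{all} the relations among the $p_j$ then follows by a dimension count: the span of the $p_j$ modulo the displayed relations has dimension $n - k$ (using $d_{i_1,\dots,i_k}\ne 0$, the relations have full rank $k$), which matches the fact that $\frac{\der\Phi}{\der z_j} = a_j/f_j$ spans an $(n-k)$-dimensional space of rational functions.

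\emph{Part (ii).} For the unit, I would verify $\sum_{j=1}^n z_j\, p_j(z) = |a|$ in $A_z$. Using $f_i = \sum_l b^l_i t_l + z_i$, we get $z_i\, p_i = a_i - a_i \sum_l b^l_i t_l / f_i$, hence $\sum_i z_i p_i = |a| - \sum_l t_l \big(\sum_i b^l_i a_i/f_i\big) = |a| - \sum_l t_l\, \frac{\der\Phi}{\der t_l}$, and the last sum vanishes in $A_z$ since each $\frac{\der\Phi}{\der t_l}=0$ there. Dividing by $|a|\ne 0$ gives $1_z$ as claimed.

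\emph{Part (iii).} The vectors $w_{i_1,\dots,i_k} = d_{i_1,\dots,i_k}\, p_{i_1}*_z\cdots*_z p_{i_k}$ span $A_z$ because the $p_j$ generate the algebra and, using the relations of part (i) to eliminate one index (say any occurrence of a fixed index, cf.\ Lemma~\ref{lem corr} and the basis lemma for $V$), every monomial in the $p_j$ reduces to a linear combination of degree-$\le k$ monomials, and the degree-$k$ ones with distinct indices exhaust a spanning set of the graded piece of top degree while lower-degree monomials are absorbed (here I would invoke the known $\dim A_z = \binom{n-1}{k}$ from \cite{V3} together with the Plücker/Orlik--Solomon structure). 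The antisymmetry $w_{i_1,\dots,i_k} = (-1)^\si w_{\si i_1,\dots,\si i_k}$ is immediate from $d_{\si i_1,\dots,\si i_k} = (-1)^\si d_{i_1,\dots,i_k}$ and commutativity of $*_z$. The relation $\sum_{j=1}^n w_{j,i_2,\dots,i_k} = \big(\sum_j d_{j,i_2,\dots,i_k}\, p_j\big)*_z p_{i_2}*_z\cdots*_z p_{i_k} = 0$ by part (i). That there are no further relations follows by counting: the number of $k$-subsets modulo the $n$ ``boundary'' relations (for each $(k-1)$-subset) is exactly $\binom{n-1}{k}$, matching $\dim A_z$; I would make this precise via the basis $\{w_{i_1,\dots,i_k} : j\notin\{i_1,\dots,i_k\}\}$ for a fixed $j$, mirroring Lemma~1.3.

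\emph{Part (iv).} The Grothendieck residue is most easily computed in the coordinates $u = (u_1,\dots,u_k)$ where I would choose $u_m = f_{j_m}$ for a suitable $k$-subset indexing the critical equation; the change of variables has Jacobian determinant involving $d_{j_1,\dots,j_k}$. The bilinear form is the Grothendieck residue of $\frac{g h\, dt_1\wedge\cdots\wedge dt_k}{\prod_l \partial\Phi/\partial t_l}$. Writing $p_i = a_i/f_i$ and expressing everything through the $u_m$, each $w_{i_1,\dots,i_k}$ becomes (up to the $d$-factor) a product $\prod a_{i_l}/u_{(\cdot)}$, and the residue picks out the coefficient of $\prod_m u_m^{-1}$. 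I expect a direct residue/partial-fractions computation to give: the form vanishes unless the two $k$-subsets share at least $k-1$ indices (the overlap condition), and in the two remaining cases one gets the stated products of $a_{i_l}$'s divided by $|a|$, with the extra sum $\sum_{m\notin\{i_1,\dots,i_k\}} a_m$ appearing in the diagonal case from summing the $n-k$ residues corresponding to the ``missing'' hyperplanes (after using the boundary relation to rewrite $w_{i_1,\dots,i_k}$). Orientation of $\Gamma$ fixes the signs $(-1)^k$, $(-1)^{k+1}$.

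\emph{Main obstacle.} The serious work is in part (iv): carrying out the multidimensional Grothendieck residue computation explicitly and tracking all signs/orientations, together with pinning down in part (iii) that the listed relations are exhaustive. Both ultimately rest on an honest identification of $A_z$ with the cohomology (Orlik--Solomon / flag) algebra of the normal-crossing arrangement $\A(z)$ and on the dimension formula $\dim A_z = \binom{n-1}{k}$ cited from \cite{V3}; once that structure is in hand, parts (i) and (ii) are short, but the residue evaluation in (iv) requires a careful, somewhat lengthy local computation in the $u$-coordinates.
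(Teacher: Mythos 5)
Your direct computations for the two easy identities are correct and complete: expanding $d_{j,i_2,\dots,i_k}$ by cofactors along the column of $j$ turns $\sum_j d_{j,i_2,\dots,i_k}\,p_j$ into a linear combination of the classes $[\der\Phi/\der t_m]$, which vanish in $A_z$, giving \Ref{arr p rel}; and the identity $\sum_j z_jp_j=|a|-\sum_l t_l\,\der\Phi/\der t_l$ proves \Ref{1 in A}. Note, though, that the paper itself offers no direct argument at all --- the entire theorem is quoted from Lemmas 3.4, 6.2, 6.3, 6.6, 6.7 and Theorem 2.7 of \cite{V4} --- so any self-contained proof is necessarily a different route, and the question is whether yours actually closes.

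It does not, in three places. First, your justification that the $p_i$ generate $A_z$ as an algebra fails as stated: the ring of regular functions on $\C^k-\cup_j H_j(z)$ is $\C[t_1,\dots,t_k][f_1^{-1},\dots,f_n^{-1}]$, and the subalgebra generated by the $f_i^{-1}$ alone does not contain the coordinate functions $t_j$ (every element of that subalgebra is a constant plus a function vanishing at infinity), nor does $1/(f_if_j)$ admit a two-term partial-fraction decomposition in several variables unless $H_i$ and $H_j$ are parallel. Generation must be proved in the quotient $A_z$, and that is exactly the nontrivial content of the lemma of \cite{V4} you are trying to replace. Second, in part (iii) the spanning claim requires the lower-degree monomials in the $p_j$ --- including the unit itself, cf.\ the nontrivial identity of Theorem \ref{thm id k} --- to lie in the span of the degree-$k$ products, which you assert but do not show; and the naive count is off, since ${n\choose k}-{n\choose k-1}\ne{n-1\choose k}$: the boundary relations are dependent, their rank ${n-1\choose k-1}$ has to be computed (e.g.\ by exhibiting your proposed basis $\{w_{i_1,\dots,i_k}:j\notin\{i_1,\dots,i_k\}\}$ and checking it both spans and has the right cardinality) before comparison with $\dim A_z={n-1\choose k}$ closes the argument. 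Third, part (iv) is a plan rather than a proof: the vanishing for $|I\cap J|<k-1$, the factor $1/|a|$, and the signs $(-1)^k,(-1)^{k+1}$ in \Ref{Mat elts} are precisely the output of the multidimensional residue computation you defer with ``I expect.'' As it stands, the proposal verifies \Ref{arr p rel} and part (ii) and gives a plausible but unexecuted outline of parts (i) (generation), (iii), and (iv).
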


\begin{proof} The statements of the theorem are the statements of  Lemmas 3.4, 6.2, 6.3, 6.6, 6.7 in \cite{V4}
transformed with the help of the 
isomorphism $\al$ of Theorem 2.7 in \cite{V4}, see also  Corollary 6.16 in \cite{V4} and Theorems  2.12 and 2.16 in \cite{V5}.
\end{proof}

This theorem shows that the elements $w_{i_1,\dots,i_k}$ define on $A_z$ a module structure over
$\Z$ independent of $z$.
With respect to this $\Z$-structure the Grothendieck form is constant. 

\begin{rem}
The presence of this $\Z$-structure in $A_z$ reflects the presence of the $\Z$-structure in the Orlik-Solomon algebra of the arrangement 
$\A(z)$, see \cite{V5, OS}.

\end{rem}

\begin{lem}
 Multiplication in $A_z$ is defined by the formulas
\bean
\label{mult in A}
p_{i_1}(z)*_z w_{i_2,\dots,i_{k+1}}
=
\frac{d_{i_2,\dots,i_{k+1}}}{f_{i_1, i_2,\dots,i_{k+1}}(z)}\,\sum_{\ell=1}^{k+1} (-1)^{\ell+1} a_{i_\ell} w_{i_1,\dots,\widehat{i_\ell},\dots,i_{k+1}},
\eean
if $ i_1\notin\{i_2,\dots,i_{k+1}\}$,
\bean
\label{mault in A 2}
p_{i_1}(z)*_z w_{i_1,i_2,\dots,i_k} =  - \sum_{m\notin\{i_1,\dots,i_k\}} p_{i_1}(z)*_z w_{m,i_2,\dots,i_k},
\eean
where $f_{i_1, i_2,\dots,i_{k+1}}(z)$ is defined in \Ref{f k+1}.

\end{lem}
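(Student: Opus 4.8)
The plan is to compute the product $p_{i_1}(z)*_z w_{i_2,\dots,i_{k+1}}$ directly inside the algebra $A_z$ of functions on the critical set, using the explicit representatives $p_i(z)=[a_i/f_i(t,z)]$ from \Ref{arr p} and the product rule for the $1/f_j$'s. The key algebraic identity is a partial-fractions relation: since the linear forms $f_{i_1},\dots,f_{i_{k+1}}$ in $k$ variables $t$ are $k+1$ affine functions with nonzero $k\times k$ minors $d_{\dots}$, their reciprocals satisfy a relation with the coefficient $f_{i_1,\dots,i_{k+1}}(z)$ from \Ref{f k+1} appearing as the ``linkage'' term. More precisely, I expect that on the critical set one has
\bean
\notag
\frac{a_{i_1}}{f_{i_1}}\cdot\frac1{f_{i_2}\cdots f_{i_{k+1}}}
\;=\;\frac{1}{f_{i_1,\dots,i_{k+1}}(z)}\sum_{\ell=1}^{k+1}(-1)^{\ell+1}a_{i_\ell}\,\frac{1}{\prod_{m\ne \ell}f_{i_m}},
\eean
up to terms that vanish in $A_z$ (i.e. lie in the Jacobian ideal $\langle\der\Phi/\der t_j\rangle$). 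Multiplying through by $d_{i_2,\dots,i_{k+1}}$ converts the products of $p$'s into $w$'s via the definition \Ref{P and w} of $w_{i_1,\dots,i_k}$, which gives \Ref{mult in A} exactly.

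\textbf{Steps.} First I would write $p_{i_1}(z)*_z w_{i_2,\dots,i_{k+1}}$ as the class of $\big(a_{i_1}/f_{i_1}\big)\prod_{\ell=2}^{k+1}\big(a_{i_\ell}/f_{i_\ell}\big)\cdot d_{i_2,\dots,i_{k+1}}$; note this is literally a product of rational functions, since multiplication in $A_z$ is induced from multiplication in $\C(\C^k-\cup H_j)$. Second, I would establish the partial-fractions identity above as an identity of rational functions \emph{modulo the Jacobian ideal}: the clean way is to observe that $\sum_{\ell}(-1)^{\ell+1}d_{i_1,\dots,\widehat{i_\ell},\dots,i_{k+1}}f_{i_\ell}(t,z)$ equals $f_{i_1,\dots,i_{k+1}}(z)$ as a function of $t$ (the $t$-dependence cancels by the Pl\"ucker/Cramer relations among the $b^j_i$), so one gets a genuine linear relation $\sum_\ell(-1)^{\ell+1}d_{i_1,\dots,\widehat{i_\ell},\dots,i_{k+1}}f_{i_\ell}=f_{i_1,\dots,i_{k+1}}(z)$; dividing by $\prod_\ell f_{i_\ell}$ and matching the minor $d_{\dots}$ to the appropriate $a$-coefficient using the critical point equations $\der\Phi/\der t_j=\sum_m a_m b^j_m/f_m=0$ then produces the stated formula. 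The role of the critical-point equations is precisely to trade the Pl\"ucker minors $d_{i_1,\dots,\widehat{i_\ell},\dots,i_{k+1}}$ for the weights $a_{i_\ell}$, which is why the answer is expressed with $a$'s rather than $d$'s. Third, multiply by $d_{i_2,\dots,i_{k+1}}$ and rewrite each resulting product $p_{i_1}(z)*_z\cdots$ (with one index omitted) as a $w$; this yields \Ref{mult in A}. Finally, the case $i_1\in\{i_2,\dots,i_{k+1}\}$: formula \Ref{mault in A 2} is immediate from the linear relation $\sum_{j=1}^n w_{j,i_2,\dots,i_k}=0$ of Theorem \ref{thm p w ()}(iii) applied after multiplying by $p_{i_1}(z)$, so no separate computation is needed.

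\textbf{Main obstacle.} The delicate point is the passage ``modulo the Jacobian ideal'': the naive partial-fractions identity among the $1/f_{i_\ell}$ is \emph{not} an identity of rational functions on $\C^k$ (there are $k+1$ forms in $k$ variables, so there is exactly one affine linear relation $\sum_\ell c_\ell f_{i_\ell}=\mathrm{const}$, and it is this relation, not a vanishing one, that drives everything). I would need to carefully track how the constant $f_{i_1,\dots,i_{k+1}}(z)$ emerges and verify that the ``leftover'' terms genuinely lie in $\langle\der\Phi/\der t_j\rangle$ so that they die in $A_z$. Concretely this means expressing the difference of the two sides as an explicit combination $\sum_j g_j(t,z)\,\der\Phi/\der t_j$; the computation is a linear-algebra identity in the $b^j_i$ (Cramer's rule for the inverse of the $k\times k$ submatrix), and I expect it to go through, but it is the one place where the argument is more than bookkeeping. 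Alternatively, and perhaps more cleanly, one can cite the corresponding multiplication formulas already proved in \cite{V4, V5} for the isomorphic model and transport them via the isomorphism $\al$ of Theorem 2.7 of \cite{V4}, exactly as was done in the proof of Theorem \ref{thm p w ()}; that would make the present lemma a corollary rather than an independent computation.
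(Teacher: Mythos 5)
Your plan is sound, but note first that the paper itself proves this lemma purely by citation (``This is Lemma 6.8 in \cite{V4}''), so your direct computation is the genuinely self-contained alternative --- and it does go through. However, the ``main obstacle'' you flag is not actually there, because the identity you propose to prove is slightly the wrong one. The identity that does all the work is the Cramer/Laplace relation
\be
\sum_{\ell=1}^{k+1}(-1)^{\ell-1}\,d_{i_1,\dots,\widehat{i_\ell},\dots,i_{k+1}}\,f_{i_\ell}(t,z)\;=\;f_{i_1,\dots,i_{k+1}}(z),
\ee
which is an \emph{exact} identity of affine functions of $t$: the coefficient of each $t_j$ on the left is the Laplace expansion of a $(k{+}1)\times(k{+}1)$ determinant with a repeated column, hence zero, and the constant term is precisely \Ref{f k+1}. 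Dividing by $\prod_\ell f_{i_\ell}$ gives an exact partial-fraction identity with the minors $d_{i_1,\dots,\widehat{i_\ell},\dots,i_{k+1}}$ as coefficients --- no passage modulo the Jacobian ideal and no use of the critical-point equations $\der\Phi/\der t_j=0$ is required. The weights $a_{i_\ell}$ in the final formula then appear by pure bookkeeping: multiply by $d_{i_2,\dots,i_{k+1}}\prod_{m=1}^{k+1}a_{i_m}$, observe that $w_{i_1,\dots,\widehat{i_\ell},\dots,i_{k+1}}=d_{i_1,\dots,\widehat{i_\ell},\dots,i_{k+1}}\prod_{m\ne\ell}(a_{i_m}/f_{i_m})$ absorbs the minor, and the leftover factor $\prod_{m}a_{i_m}\big/\prod_{m\ne\ell}a_{i_m}=a_{i_\ell}$ is exactly the stated coefficient. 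So the version of the identity you wrote, with $a$'s in place of $d$'s and holding only modulo $\langle\der\Phi/\der t_j\rangle$, is a detour you do not need (and as an exact identity it would be false). Your treatment of the case $i_1\in\{i_2,\dots,i_{k+1}\}$ via $\sum_{j}w_{j,i_2,\dots,i_k}=0$ is correct. One small point worth recording: the formula presupposes $f_{i_1,\dots,i_{k+1}}(z)\ne0$, which holds because $z\in\C^n-\Delta$.
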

\begin{proof} This is Lemma 6.8 in \cite{V4}.
\end{proof}

\begin{thm}
\label{thm id k}
For any $i_{0} \in [1,\dots,n]$, the unit element  $1_z \in A_z$ is given by the formula
\bean
\label{O}
1_z &=& \frac {1}{|a|^k} \sum_{1\leq i_1<\dots<i_k\leq n
\atop i_{0}\notin\{i_1,\dots,i_k\}} \frac{(f_{i_0,i_1,\dots,i_k}(z))^k}{\prod_{m=0}^{k} (-1)^md_{i_0,i_1,\dots,\widehat{i_m},\dots,i_{k}}} w_{i_1,\dots,i_k} .
\eean

\end{thm}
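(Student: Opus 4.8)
The plan is to rewrite \Ref{O} as a congruence of rational functions modulo the Jacobian ideal of the master function and then verify that congruence. Denote by $Q(z)$ the right-hand side of \Ref{O}. In the realization $A_z=\C(\C^k-\cup_{j=1}^nH_j(z))\big/\big\langle\frac{\der\Phi}{\der t_1},\dots,\frac{\der\Phi}{\der t_k}\big\rangle$ one has $p_j(z)=[a_j/f_j(t,z)]$, hence $w_{i_1,\dots,i_k}=d_{i_1,\dots,i_k}\big[\prod_{l=1}^ka_{i_l}/f_{i_l}(t,z)\big]$; moreover the $m=0$ factor of the denominator in \Ref{O} is $(-1)^0d_{i_1,\dots,i_k}=d_{i_1,\dots,i_k}$, which cancels the leading factor of $w_{i_1,\dots,i_k}$. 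Therefore
\be
Q(z)=\left[\,\frac1{|a|^k}\sum_{1\le i_1<\dots<i_k\le n\atop i_{0}\notin\{i_1,\dots,i_k\}}\frac{\big(f_{i_0,i_1,\dots,i_k}(z)\big)^k}{\prod_{m=1}^k(-1)^md_{i_0,i_1,\dots,\widehat{i_m},\dots,i_k}}\ \prod_{l=1}^k\frac{a_{i_l}}{f_{i_l}(t,z)}\,\right],
\ee
and, since $1_z=[1]$, the theorem is equivalent to the statement that the bracketed rational function of $t$ is congruent to $1$ modulo $J:=\big\langle\frac{\der\Phi}{\der t_1},\dots,\frac{\der\Phi}{\der t_k}\big\rangle$.

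The bridge to that statement is the identity of functions of $t$
\be
f_{i_0,i_1,\dots,i_k}(z)=\sum_{m=0}^k(-1)^md_{i_0,i_1,\dots,\widehat{i_m},\dots,i_k}\,f_{i_m}(t,z),
\ee
obtained from \Ref{f k+1} by writing $z_{i_m}=f_{i_m}(t,z)-\sum_lb^l_{i_m}t_l$ and using $\sum_{m=0}^k(-1)^md_{i_0,i_1,\dots,\widehat{i_m},\dots,i_k}b^l_{i_m}=0$ for each $l$ (Laplace expansion of a $(k+1)\times(k+1)$ determinant with a repeated column). Substituting this back into $Q(z)$, expanding the $k$-th power, and reducing with \Ref{arr p rel} and the Pl\"ucker relations \Ref{Pl rel} is expected to collapse the bracketed function to $1$ plus an explicit member of $J$. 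An alternative, closer to the machinery already developed: $p_{i_0}(z)=[a_{i_0}/f_{i_0}]$ is invertible in $A_z$, with inverse $[f_{i_0}/a_{i_0}]$, so it suffices to prove $Q(z)*_zp_{i_0}(z)=p_{i_0}(z)$ (whence $Q=Q*_zp_{i_0}*_zp_{i_0}^{-1}=p_{i_0}*_zp_{i_0}^{-1}=1_z$); expanding $p_{i_0}*_zw_{i_1,\dots,i_k}$ by the multiplication law \Ref{mult in A}, its denominator $f_{i_0,i_1,\dots,i_k}(z)$ cancels one power of the numerator of $Q$, and the $w$'s that pick up the index $i_0$ are returned to the basis $\{w_{i_1,\dots,i_k}\mid i_0\notin\{i_1,\dots,i_k\}\}$ by \Ref{IS}; the resulting sum should telescope to $p_{i_0}(z)$.

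I would treat the case $k=1$ first, where it is immediate: there $d_j=b^1_j$, the identity above reads $f_{i_0,i}(z)=d_if_{i_0}(t,z)-d_{i_0}f_i(t,z)$, and the bracketed function becomes
\be
\frac1{|a|}\sum_{i\ne i_0}\frac{a_i\bigl(d_if_{i_0}-d_{i_0}f_i\bigr)}{-d_{i_0}f_i}=1-\frac{f_{i_0}}{|a|\,d_{i_0}}\sum_{j=1}^n\frac{a_jb^1_j}{f_j}=1-\frac{f_{i_0}}{|a|\,d_{i_0}}\,\frac{\der\Phi}{\der t_1}\equiv 1\pmod J .
\ee
The general case runs on the same mechanism but with substantially more bookkeeping, which I would organize as an induction on $k$. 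The step I expect to be the main obstacle is exactly this combinatorial core: controlling the signs $(-1)^m$ together with the Pl\"ucker relations \Ref{Pl rel} among the minors $d_{i_1,\dots,i_k}$ and the induced linear relations among the forms $f_{i_1,\dots,i_{k+1}}$ — these are what force the dependence on the auxiliary index $i_0$ to cancel, as it must, since $1_z$ does not depend on $i_0$ — and the correct reabsorption, via \Ref{IS}, of the terms indexed by $k$-subsets that do contain $i_0$. Everything else (the cancellation of $d_{i_1,\dots,i_k}$ in the coefficient, the identity for $f_{i_0,i_1,\dots,i_k}(z)$, the invertibility of $p_{i_0}$) is routine.
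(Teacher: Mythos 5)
Your overall strategy is plausible and your $k=1$ computation checks out, but as written this is a plan rather than a proof: for $k\ge 2$ the entire content of the theorem is concentrated in the step you defer with ``is expected to collapse to $1$,'' ``should telescope,'' and ``I would organize as an induction on $k$.'' Concretely, after substituting the (correct) identity $f_{i_0,i_1,\dots,i_k}(z)=\sum_{m=0}^k(-1)^md_{i_0,\dots,\widehat{i_m},\dots,i_k}f_{i_m}(t,z)$ and expanding the $k$-th power, you are left with a sum over multi-indices $(m_1,\dots,m_k)\in\{0,\dots,k\}^k$ of products $\prod_j f_{i_{m_j}}\big/\prod_l f_{i_l}$, and the whole theorem amounts to showing that, after summing over the $k$-subsets, everything except the diagonal contribution either cancels (via the Pl\"ucker relations \Ref{Pl rel} and the relations \Ref{IS}) or lands in the Jacobian ideal through $\sum_j a_jb^l_j/f_j=\der\Phi/\der t_l$. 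You have not exhibited the combinatorial identity among the minors $d_{i_0,\dots,\widehat{i_m},\dots,i_k}$ that forces this, and it is not routine: already for $k=2$ the cross terms $f_{i_{m_1}}f_{i_{m_2}}/(f_{i_1}f_{i_2})$ with $\{m_1,m_2\}\ne\{1,2\}$ require a genuine argument. The same gap reappears in your alternative route: in $Q*_zp_{i_0}$ the factor $d_{i_1,\dots,i_k}$ from \Ref{mult in A} does cancel the $m=0$ factor of the denominator and one power of $f_{i_0,i_1,\dots,i_k}(z)$ is consumed, but the resulting terms $w_{i_0,i_1,\dots,\widehat{i_\ell},\dots,i_k}$ must be pushed back into the basis via $\sum_jw_{j,i_2,\dots,i_k}=0$, which produces a new double sum over $k$-subsets rather than an evident telescope; nothing is actually shown to equal $p_{i_0}(z)$.

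For comparison, the paper does not prove this statement in-text at all: its proof is the citation ``This is Theorem 6.12 in \cite{V4}.'' A self-contained argument along your lines would therefore be a real addition, but to count as a proof you must execute the induction on $k$ (or the direct expansion), isolate and verify the sign-and-minor identity that kills the off-diagonal terms and the $i_0$-dependence, and check the reabsorption via \Ref{IS}. As it stands, only the case $k=1$ is established.
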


\begin{proof}  This is Theorem 6.12 in \cite{V4}.
\end{proof}

\subsection{Potential functions}
\label{sec potential function}

Denote
\bean
\label{1st kind}
Q(z) := (1_z,1_z)_z .
\eean
The function $(-1)^k Q(z)$ is called in \cite{V4} the {\it potential of first kind}. Theorem 3.11 in \cite{V4} says that for any
$i_1,\dots,i_k,j_1,\dots,j_k\in [1,\dots,n]$, we have
\bean
\label{pp der}
(p_{i_1}(z)*_z\dots *_zp_{i_k}(z), p_{j_1}(z)*_z\dots *_zp_{j_k}(z))_z\ =\
\frac{|a|^{2k}}{(2k)!} \,
\frac {\der^{2k} Q}{\der z_{i_1}\dots\der z_{i_k}\der z_{j_1}\dots\der z_{j_k}}.
\eean

\begin{thm}
\label{thm new}
We have
\bean
\label{(1,1)}
Q(z) = 
\frac {1}{|a|^{2k+1}} \sum_{1\leq i_1<\dots<i_{k+1}\leq n}\left(
\prod_{m=1}^{k+1}  \frac{a_{i_m}}{d_{i_1,\dots,\widehat{i_m},\dots,i_{k+1}}^2 }\right) (f_{i_1,\dots,i_{k+1}}(z))^{2k}.
\eean
\end{thm}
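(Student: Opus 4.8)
The plan is to compute $Q(z) = (1_z, 1_z)_z$ directly by inserting the explicit formula for $1_z$ from Theorem \ref{thm id k} into one slot of the Grothendieck form and then expanding using the matrix elements from Theorem \ref{thm p w ()}(iv). First I would fix an index $i_0 \in [1,\dots,n]$ and write, using \Ref{O},
\bea
Q(z) = (1_z, 1_z)_z = \frac{1}{|a|^k} \sum_{i_0\notin\{i_1<\dots<i_k\}} \frac{(f_{i_0,i_1,\dots,i_k}(z))^k}{\prod_{m=0}^{k}(-1)^m d_{i_0,i_1,\dots,\widehat{i_m},\dots,i_k}}\,(w_{i_1,\dots,i_k}, 1_z)_z .
\eea
The key simplification is that because $1_z$ is the unit element, $(w_{i_1,\dots,i_k}, 1_z)_z = (1_z *_z w_{i_1,\dots,i_k})$ evaluated against... more to the point, $(w_{i_1,\dots,i_k}, 1_z)_z$ is computed by a residue/trace that can be read off once, and in fact it is most efficient to use \Ref{1 in A} for $1_z$ in the \emph{second} slot and bilinearity: $(w_{i_1,\dots,i_k}, 1_z)_z = \frac{1}{|a|}\sum_{j=1}^n z_j (w_{i_1,\dots,i_k}, p_j(z))_z$. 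Since $p_j(z)$ is, up to the constant $d$, a $w$ with a single index, one more application of the matrix-element formulas \Ref{Mat elts} reduces everything to known quantities.

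The core of the argument is then a bookkeeping computation: substitute the two expansions, collect the double sum over $(k+1)$-element subsets $\{i_1,\dots,i_{k+1}\}$ (these arise because the nonzero matrix elements in \Ref{Mat elts} occur only when the two index sets agree in at least $k-1$ places, i.e. their union has size $k$ or $k+1$), and verify that the coefficient of each monomial $(f_{i_1,\dots,i_{k+1}}(z))^{2k}$ collapses to $\frac{1}{|a|^{2k+1}}\prod_{m=1}^{k+1} \frac{a_{i_m}}{d_{i_1,\dots,\widehat{i_m},\dots,i_{k+1}}^2}$. Here one uses the cocycle-type identity relating $f_{i_0,i_1,\dots,i_k}(z)$ for varying $i_0$ to $f_{i_1,\dots,i_{k+1}}(z)$, together with the Plücker relations \Ref{Pl rel} among the $d$'s, to see that the apparent dependence on the auxiliary index $i_0$ drops out and the ``cross terms'' (where the two $w$'s share exactly $k-1$ indices) combine correctly with the ``diagonal terms.'' An alternative, cleaner route I would try first: differentiate \Ref{(1,1)} $2k$ times and check it against \Ref{pp der} with $1_z$ replaced using \Ref{p=P}-type identities — but since \Ref{pp der} only pins down $Q$ up to a polynomial of degree $<2k$, one still needs the homogeneity of both sides (each $f_{i_1,\dots,i_{k+1}}$ is linear in $z$, so the right side of \Ref{(1,1)} is homogeneous of degree $2k$, matching the weight forced by the homogeneity axiom) to conclude equality on the nose.

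I expect the main obstacle to be the combinatorial reorganization in the middle step: showing that after summing the product of the two explicit expansions, the $i_0$-dependent denominators $\prod_{m=0}^k (-1)^m d_{i_0,i_1,\dots,\widehat{i_m},\dots,i_k}$ and the numerators $(f_{i_0,\dots})^k$ conspire — via Plücker and via the linear identity $f_{i_1,\dots,i_{k+1}} = \sum_{l}(-1)^{l-1} d_{\dots\widehat{i_l}\dots} z_{i_l}$ — to produce exactly the symmetric right-hand side of \Ref{(1,1)}. This is essentially the content of Theorem 6.12 and the potential-of-first-kind computations in \cite{V4}, so the safest path is to cite those: the theorem follows by combining \Ref{1st kind}, Theorem \ref{thm id k}, the matrix elements \Ref{Mat elts} of Theorem \ref{thm p w ()}(iv), and the Plücker relations \Ref{Pl rel}, with the degree/homogeneity count ensuring there is no ambiguity.
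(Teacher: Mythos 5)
Your ``alternative, cleaner route'' is in fact the paper's own proof, and it is the one you should commit to: write $Q^\vee$ for the right-hand side of \Ref{(1,1)}, observe that both $Q$ and $Q^\vee$ are homogeneous polynomials of degree $2k$ in $z$, and check that their (constant) $2k$-th partial derivatives coincide --- for $Q$ these are given by \Ref{pp der} together with the $z$-independent matrix elements \Ref{Mat elts} and the relations \Ref{IS}, and for $Q^\vee$ by direct differentiation of the linear forms $f_{i_1,\dots,i_{k+1}}$. A homogeneous polynomial of degree $2k$ is determined by its $2k$-th derivatives, so equality follows with no leftover ambiguity. Your primary route --- substituting the expansion \Ref{O} of $1_z$ into the Grothendieck form and reorganizing the double sum via the Pl\"ucker relations --- should also work, but it is exactly the heavy bookkeeping you anticipate, and the derivative comparison bypasses it entirely.

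One point needs repair in your sketch: to justify that $Q(z)=(1_z,1_z)_z$ is a homogeneous polynomial of degree $2k$ you invoke ``the weight forced by the homogeneity axiom,'' but in this example the homogeneity identity (Lemma \ref{lem homog}) is itself \emph{deduced from} Theorem \ref{thm new}, so that appeal is circular. Instead, use that $1_z$ is the unit, hence $1_z=1_z^{*k}=\frac1{|a|^k}\sum_{j_1,\dots,j_k}z_{j_1}\cdots z_{j_k}\,p_{j_1}(z)*_z\cdots *_z p_{j_k}(z)$; expanding $(1_z,1_z)_z=(1_z^{*k},1_z^{*k})_z$ by bilinearity and noting that each pairing $(p_{i_1}*_z\cdots *_zp_{i_k},\,p_{j_1}*_z\cdots *_zp_{j_k})_z$ is a constant by \Ref{Mat elts} exhibits $Q$ explicitly as a homogeneous polynomial of degree $2k$. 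With that substitution your alternative route is complete and agrees with the paper.
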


Cf. formulas (4.22) and (5.58) in \cite{V4}.

\begin{proof}  
Denote by $Q^\vee$ the right-hand side of \Ref{(1,1)}. Both $Q(z)$ and $Q^\vee(z)$ are homogeneous polynomials
of degree $2k$ 
in $z_1,\dots,z_n$. The $2k$-th derivatives of $Q(z)$ are given by formulas \Ref{pp der} and \Ref{IS}.
A direct verification shows that the  $2k$-th derivatives of $Q^\vee(z)$ also are given by the same
formulas  \Ref{IS}. Hence
$Q(z)=Q^\vee(z)$.
\end{proof}

Introduce the {\it potential function}
\bean
\label{POT2k}
&&
\\
\notag
&&
L(z_1,\dots,z_n) = \frac{1}{(2k)!} \sum_{1\leq i_1<\dots <i_{k+1}\leq n} \left(\prod_{m=1}^{k+1}
 \frac{a_{i_m}}{d_{i_1,\dots,\widehat{i_m},\dots,i_{k+1}}^2 }\right)
 (f_{i_1,\dots,i_{k+1}}(z))^{2k}\log (f_{i_1,\dots,i_{k+1}}(z)) .
\eean
In \cite{V4} this function was called the {\it potential  of second kind}.

\begin{thm}[\cite{V4}]
\label{thm 2k+1 der}
For any $i_0, i_1,\dots,i_k,j_1,\dots,j_k\in [1,\dots,n]$, we have
\bean
\label{LL der}
\phantom{aaa}
(p_{i_0}(z)*_zp_{i_1}(z)*_z\dots *_zp_{i_k}(z), p_{j_1}(z)*_z\dots *_zp_{j_k}(z))_z\ =\
\frac {\der^{2k+1} L}{\der z_{i_0}\der z_{i_1}\dots\der z_{i_k}\der z_{j_1}\dots\der z_{j_k}}.
\eean

\end{thm}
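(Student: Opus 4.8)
The plan is to deduce \Ref{LL der} from the already-established formula \Ref{pp der} for the $2k$-th derivatives of the potential of first kind $Q(z)$, together with the explicit formulas \Ref{(1,1)} for $Q$ and \Ref{POT2k} for $L$. First I would observe that \Ref{(1,1)} and \Ref{POT2k} are related by $L(z)=\frac{1}{(2k)!}\sum (\text{coeff})\,g^{2k}\log g$ with $g=f_{i_1,\dots,i_{k+1}}(z)$, while $\frac{|a|^{2k+1}}{(2k)!}Q(z)=\frac{1}{(2k)!}\sum(\text{coeff})\,g^{2k}$. Since each $f_{i_1,\dots,i_{k+1}}(z)$ is an affine-linear function of $z$, the chain rule gives a clean relation between derivatives of $g^{2k}\log g$ and derivatives of $g^{2k}$: differentiating $g^{2k}\log g$ once in any $z_{i_0}$ produces $2k\,g^{2k-1}(\partial_{i_0}g)\log g + g^{2k-1}(\partial_{i_0}g)$, and the logarithmic term has derivatives of order $2k$ in the remaining variables controlled by the corresponding derivatives of $g^{2k}\log g$ of total order $2k$, which — since $g$ is linear — are built from the lower-order $g^{2k-j}$ pieces. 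The key computational identity is that for a linear form $g$, one has $\partial_{i_0}\partial_{i_1}\cdots\partial_{i_k}\partial_{j_1}\cdots\partial_{j_k}\big(g^{2k}\log g\big)=\frac{1}{2k+1}\,\partial_{i_0}\Big(\partial_{i_1}\cdots\partial_{i_k}\partial_{j_1}\cdots\partial_{j_k}\big(g^{2k+1}\big)/g\Big)$ up to terms that vanish; more precisely, the $(2k+1)$-st derivative of $g^{2k+1}/(2k+1)$ equals $(2k)!\,g^0\cdot(\text{product of the linear coefficients})$, which is exactly the combinatorial factor appearing when one differentiates the summand of \Ref{POT2k} a total of $2k+1$ times.

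Concretely, I would carry out the following steps. Step 1: record that $\frac{\partial^{2k}}{\partial z_{i_1}\cdots\partial z_{j_k}}\big(f_{i_1,\dots,i_{k+1}}^{2k}\log f_{i_1,\dots,i_{k+1}}\big)$ is a sum of a $\log$-term (whose coefficient is a multiple of the $2k$-th derivative of $f^{2k}$) and a $\log$-free rational term. Step 2: differentiate once more with respect to $z_{i_0}$; the derivative of the $\log$-term reproduces, up to an overall constant, the $2k$-th derivative of $f^{2k}$ divided by $f$ times $\partial_{i_0}f$, plus a new $\log$-term. Step 3: combine with the identity $\partial_{i_0}\big(\text{$\log$-free part}\big)$ and check that the $\log$-terms cancel, so that $\frac{\partial^{2k+1}L}{\partial z_{i_0}\cdots\partial z_{j_k}}$ equals $\frac{|a|^{2k+1}}{(2k)!}\cdot\frac{1}{|a|^{2k+1}}\sum(\text{coeff})\cdot(\text{explicit rational expression in the }f\text{'s and }d\text{'s})$. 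Step 4: identify this explicit rational expression with the left-hand side of \Ref{LL der} using the multiplication formulas \Ref{mult in A}, \Ref{mault in A 2} and the matrix-element formulas \Ref{Mat elts} from Theorem \ref{thm p w ()}: expanding $p_{i_0}*_zp_{i_1}*_z\cdots*_zp_{i_k}$ and $p_{j_1}*_z\cdots*_zp_{j_k}$ into the $w$-basis and pairing via the Grothendieck form yields precisely the same sum over $(k{+}1)$-element subsets with the coefficients $\prod a_{i_m}/d_{\dots}^2$ and the factor $(\partial_{i_0}f)\,f^{2k-1}$-type numerators. Alternatively, and more cleanly, Step 4 can be replaced by: the left-hand side of \Ref{LL der} equals $(p_{i_0}(z)*_z A,\ B)_z$ where $A=p_{i_1}*_z\cdots*_zp_{i_k}$, $B=p_{j_1}*_z\cdots*_zp_{j_k}$, and by the Frobenius property this is $\partial_{i_0}$ applied to something whose value is governed by \Ref{pp der}; so one only needs that $p_{i_0}(z)*_z$ acts on the Grothendieck pairing \Ref{pp der} as the derivation $\partial_{i_0}$, which in turn follows from $p_{i_0}(z)=[a_{i_0}/f_{i_0}]=[\partial\Phi/\partial z_{i_0}]$ together with a standard residue/variation-of-the-master-function argument.

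I would prefer the second route for Step 4: let $Q(z)=(1_z,1_z)_z$ and, more generally, note that \Ref{pp der} says the Grothendieck pairing of two $k$-fold products of the $p_j$'s is, up to the constant $|a|^{2k}/(2k)!$, a $2k$-th partial derivative of $Q$. Differentiating \Ref{pp der} once with respect to $z_{i_0}$ and using that $\partial_{i_0}$ of a Grothendieck pairing $(\,g,h\,)_z$ with $g,h$ of the form $[a_{\bullet}/f_{\bullet}]$-products equals $(p_{i_0}(z)*_z g,h)_z+(g,p_{i_0}(z)*_zh)_z - (p_{i_0}(z)*_z 1_z)(g,h)_z$-type terms, I would reduce to checking that this "Leibniz rule for $\partial_{i_0}$" holds; but since $p_{i_0}(z)=[\partial\Phi/\partial z_{i_0}]$ and the Grothendieck form is defined by an integral against $\prod_j(\partial\Phi/\partial t_j)^{-1}$, differentiating under the integral sign in $z_{i_0}$ produces exactly the insertion of $\partial\Phi/\partial z_{i_0}=a_{i_0}/f_{i_0}$, i.e. multiplication by $p_{i_0}(z)$ in $A_z$, plus the correction from differentiating the constraints $\partial\Phi/\partial t_j=0$ defining $\Gamma$, which vanishes on the critical set. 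This gives $\partial_{i_0}(g,h)_z=(p_{i_0}(z)*_z g,h)_z$ whenever one of $g,h$ already "absorbs" the unit appropriately — and here the bookkeeping with the factor $\frac{1}{2k+1}$ versus $\frac{1}{2k}$ in passing from $Q$ to $L$ is exactly matched by the extra differentiation.

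\textbf{Anticipated main obstacle.} The hard part will be the constant/combinatorial bookkeeping in Step 3--4: matching the $(2k+1)!$ vs $(2k)!$ factorials, the power $2k$ vs $2k-1$ of $f_{i_1,\dots,i_{k+1}}$, and the signs $(-1)^m$ in $d_{i_0,i_1,\dots,\widehat{i_m},\dots,i_k}$, so that the $(2k{+}1)$-st derivative of the summand of $L$ in \Ref{POT2k} exactly reproduces the right-hand sides of the multiplication formulas \Ref{mult in A}--\Ref{mault in A 2} paired via \Ref{Mat elts}. The conceptual content — that $p_{i_0}(z)*_z$ implements $\partial_{i_0}$ on the potential because $p_{i_0}(z)=[\partial\Phi/\partial z_{i_0}]$ — is straightforward; it is the uniform verification of coefficients (best done by checking that both sides have the same mixed partials of all orders $\le 2k+1$, using that all functions involved are polynomials in $z$, so agreement of top derivatives plus homogeneity forces equality, exactly as in the proof of Theorem \ref{thm new}) that requires care.
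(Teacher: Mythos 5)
First, a point of reference: the paper itself does not prove Theorem \ref{thm 2k+1 der} --- it is imported verbatim from \cite{V4} (Theorem 6.20) --- so your attempt must stand on its own. Your preferred shortcut (the second version of Step 4 and the whole ``Leibniz rule'' paragraph) does not work. You propose to differentiate \Ref{pp der} in $z_{i_0}$ and argue that $\frac{\der}{\der z_{i_0}}$ of the Grothendieck pairing amounts to inserting $p_{i_0}(z)*_z$. But $Q$ is a homogeneous polynomial of degree $2k$ in $z$ (Theorem \ref{thm new}), so $\frac{\der}{\der z_{i_0}}$ applied to the right-hand side of \Ref{pp der} is identically zero, while the left-hand side of \Ref{LL der} is a nonzero rational function of $z$. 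Consistently, the matrix elements \Ref{Mat elts} of the Grothendieck form in the $w$-basis are constants, i.e.\ the form is covariantly constant for the combinatorial connection, so $\frac{\der}{\der z_{i_0}}(w_{i_1,\dots,i_k},w_{j_1,\dots,j_k})_z=0\ne(p_{i_0}(z)*_zw_{i_1,\dots,i_k},w_{j_1,\dots,j_k})_z$; no identity of the shape ``$\der_{i_0}$ of a pairing equals insertion of $p_{i_0}$'' can hold, and differentiating \Ref{pp der} only reproduces $0=0$. The passage from $Q$ to $L$ is not a differentiation but the replacement of $f^{2k}$ by $f^{2k}\log f$ in each summand, and that is where all the content of the theorem sits.

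Your first route is the correct one, but you have not executed any of it, and the execution is the theorem. Concretely: since each $f_{i_1,\dots,i_{k+1}}$ is linear in $z$ with $\frac{\der f_{i_1,\dots,i_{k+1}}}{\der z_{i_l}}=(-1)^{l-1}d_{i_1,\dots,\widehat{i_l},\dots,i_{k+1}}$ and $\frac{d^{2k+1}}{ds^{2k+1}}\bigl(s^{2k}\log s\bigr)=(2k)!/s$, the $(2k+1)$-st derivative of \Ref{POT2k} is an explicit sum over the $(k+1)$-element subsets containing $\{i_0,i_1,\dots,i_k\}\cup\{j_1,\dots,j_k\}$ of products of the $a$'s and $d$'s divided by the corresponding $f(z)$; on the other side one must expand $p_{i_0}(z)*_zP_{i_1,\dots,i_k}$ via \Ref{mult in A}--\Ref{mault in A 2}, pair with $P_{j_1,\dots,j_k}$ via \Ref{Mat elts}, and use \Ref{IS} to handle repeated indices. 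When $|\{i_0,\dots,i_k\}\cup\{j_1,\dots,j_k\}|<k+1$ there are several contributing subsets and genuine cancellations are needed (already for $k=1$, in $(p_i(z)*_zp_i(z),p_j(z))_z$ all the terms $m\notin\{i,j\}$ coming from \Ref{mault in A 2} must cancel after pairing). None of this bookkeeping --- the signs $(-1)^m$, the factorials, the cancellations --- appears in your write-up, so what you have is a plausible plan built around the right explicit formulas, not a proof.
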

This is Theorem 6.20 in \cite{V4}.

\begin{lem}
\label{lem invariance}
For any $i_2,\dots, i_{k}\in[1,\dots,n]$, we have
\bean
\label{L rel 1 c}
\sum_{j=1}^n d_{j, i_2,\dots,i_{k}}\frac{\der L}{\der z_j} =0.
\eean
\end{lem}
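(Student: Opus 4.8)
The plan is to deduce the relation \Ref{L rel 1 c} directly from the explicit formula \Ref{POT2k} for the potential $L$, using the Pl\"ucker relations \Ref{Pl rel}. First I would differentiate $L$ with respect to $z_j$. Since $f_{i_1,\dots,i_{k+1}}(z)= \sum_{l=1}^{k+1}(-1)^{l-1} d_{i_1,\dots,\widehat{i_l},\dots,i_{k+1}} z_{i_l}$, its derivative $\der f_{i_1,\dots,i_{k+1}}/\der z_j$ is $(-1)^{l-1} d_{i_1,\dots,\widehat{i_l},\dots,i_{k+1}}$ if $j=i_l$ for some $l$, and $0$ otherwise. Moreover $\der\big(g^{2k}\log g\big)/\der g = 2k\, g^{2k-1}\log g + g^{2k-1} = g^{2k-1}(2k\log g + 1)$, so each term of $\der L/\der z_j$ carries a factor $g^{2k-1}(2k\log g+1)$ with $g=f_{i_1,\dots,i_{k+1}}(z)$ together with the constant $\big(\prod_m a_{i_m}/d_{\dots\widehat{i_m}\dots}^2\big)$ and the factor $(-1)^{l-1}d_{i_1,\dots,\widehat{i_l},\dots,i_{k+1}}$ coming from the chain rule.

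Next I would form the sum $\sum_{j=1}^n d_{j,i_2,\dots,i_k}\,\der L/\der z_j$ and reorganize it by grouping, for each fixed $(k+1)$-element subset $\{i_1<\dots<i_{k+1}\}$ appearing in \Ref{POT2k}, the contributions of the indices $j=i_l$ that actually lie in that subset. Each such group has the shape
\[
\Bigl(\tfrac{1}{(2k)!}\prod_{m=1}^{k+1}\tfrac{a_{i_m}}{d_{i_1,\dots,\widehat{i_m},\dots,i_{k+1}}^2}\Bigr)\,
f_{i_1,\dots,i_{k+1}}(z)^{2k-1}\bigl(2k\log f_{i_1,\dots,i_{k+1}}(z)+1\bigr)
\sum_{l=1}^{k+1}(-1)^{l-1} d_{i_1,\dots,\widehat{i_l},\dots,i_{k+1}}\,d_{i_l,i_2,\dots,i_k}.
\]
The inner sum $\sum_{l=1}^{k+1}(-1)^{l-1} d_{i_1,\dots,\widehat{i_l},\dots,i_{k+1}}\,d_{i_l,i_2,\dots,i_k}$ is, up to an overall sign, exactly the left-hand side of the Pl\"ucker relation \Ref{Pl rel} with $j_1,\dots,j_{k+1}$ specialized to $i_1,\dots,i_{k+1}$ and the remaining indices $i_2,\dots,i_k$ in the second factor; hence it vanishes. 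Thus every group contributes zero and the total sum is zero, proving \Ref{L rel 1 c}.

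The main obstacle is purely bookkeeping: one must make sure that when several of the indices $i_2,\dots,i_k$ in $d_{j,i_2,\dots,i_k}$ coincide with indices of $\{i_1,\dots,i_{k+1}\}$, or when the outer index $j$ ranges over values \emph{not} in that subset, the corresponding $\der f_{i_1,\dots,i_{k+1}}/\der z_j$ vanishes, so only the ``diagonal'' terms survive and the reindexing that turns the surviving sum into the Pl\"ucker combination is legitimate. (When some of $i_2,\dots,i_k$ repeat, the factor $d_{j,i_2,\dots,i_k}$ is identically zero and there is nothing to prove, so one may assume $i_2,\dots,i_k$ are distinct.) An alternative, more conceptual route that avoids the combinatorics is to use Theorem \ref{thm 2k+1 der}: note $\der L/\der z_j$ differs from the derivative of the potential of first kind only through the arrangement-theoretic quantities, and the relation $\sum_j d_{j,i_2,\dots,i_k} p_j(z)=0$ from Theorem \ref{thm p w ()}(i) forces $\sum_j d_{j,i_2,\dots,i_k}\,(p_j*_z u, v)_z=0$ for all $u,v$; one would then need to argue that the $2k$-fold derivatives of $\der L/\der z_j$ determine $\der L/\der z_j$ up to a polynomial of degree $<2k$ and check that this polynomial ambiguity also satisfies the relation, which is immediate since $L$ is homogeneous. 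I would present the direct Pl\"ucker computation as the cleaner of the two.
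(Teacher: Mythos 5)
Your argument is correct and is exactly the paper's intended proof: the paper's own justification is the one-line remark that the lemma follows from the Pl\"ucker relations \Ref{Pl rel}, and your computation simply fills in the differentiation of \Ref{POT2k} and the grouping by $(k+1)$-element subsets that reduces each group to a vanishing Pl\"ucker combination. No gaps.
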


\begin{proof} The lemma follows from the Pl\"ucker relations \Ref{Pl rel}.
\end{proof}

\begin{lem}
\label{lem homog}
We have
\bean
\label{homog}
\sum_{j=1}^n z_j\frac{\der L}{\der z_j}\ = \ 2k\, L\ +\  \frac{|a|^{2k+1}}{(2k)!}\, (1_z,1_z)_z.
\eean
\end{lem}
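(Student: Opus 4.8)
The cleanest route is to compute the Euler derivative $\sum_j z_j\partial_{z_j}L$ directly from the explicit formula \Ref{POT2k}, using that each building block $f_{i_1,\dots,i_{k+1}}(z)$ is linear and homogeneous of degree $1$ in $z$. By \Ref{f k+1}, Euler's identity gives $\sum_j z_j\frac{\der}{\der z_j}f_{i_1,\dots,i_{k+1}}(z) = f_{i_1,\dots,i_{k+1}}(z)$. So for a single summand $g^{2k}\log g$ with $g=f_{i_1,\dots,i_{k+1}}(z)$ we get
\bea
\sum_j z_j\frac{\der}{\der z_j}\bigl(g^{2k}\log g\bigr)
= (2k)\,g^{2k}\log g\;+\;g^{2k}.
\eea
Summing over all $(k+1)$-element subsets and restoring the prefactors $\frac1{(2k)!}\prod_m a_{i_m}/d^2$, the first term reproduces $2k\,L$, while the second term collects into
\bea
\frac{1}{(2k)!}\sum_{1\leq i_1<\dots<i_{k+1}\leq n}
\left(\prod_{m=1}^{k+1}\frac{a_{i_m}}{d_{i_1,\dots,\widehat{i_m},\dots,i_{k+1}}^2}\right)
\bigl(f_{i_1,\dots,i_{k+1}}(z)\bigr)^{2k}
=\frac{1}{(2k)!}\,|a|^{2k+1}\,Q(z),
\eea
where the last equality is precisely Theorem \ref{thm new} together with the definition \Ref{1st kind} of $Q(z)=(1_z,1_z)_z$. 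This yields exactly \Ref{homog}.

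Assembling this into a short argument: first I would invoke Euler's homogeneity relation for linear forms to handle $\sum_j z_j\partial_{z_j}$ termwise on \Ref{POT2k}; then I would identify the ``$\log$''-part of the result with $2kL$ (this is immediate, since it is $2k$ times \Ref{POT2k} term by term); finally I would recognize the remaining polynomial part as $\frac{|a|^{2k+1}}{(2k)!}Q(z)$ by comparing with \Ref{(1,1)} and substituting $Q(z)=(1_z,1_z)_z$. No associativity or Frobenius structure is needed here — only the explicit formulas and Theorem \ref{thm new}.

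**Main obstacle.** There is essentially no hard step; the only thing to be careful about is the interchange of the sum $\sum_j z_j\partial_{z_j}$ with the (finite) sum over subsets and the bookkeeping of the prefactors, and making sure the ``$+g^{2k}$'' contributions are grouped with the correct coefficients to match \Ref{(1,1)} exactly. One subtlety worth a sentence: $\log f_{i_1,\dots,i_{k+1}}(z)$ is multivalued on $\C^n-\Delta$, but $\sum_j z_j\partial_{z_j}$ acts on its single-valued derivative $\partial_{z_j}\log g = (\partial_{z_j}g)/g$, so the computation is legitimate on the universal cover (or on any simply connected domain), and the resulting identity \Ref{homog} is between single-valued rational, resp. polynomial, functions. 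Alternatively, one could avoid \Ref{POT2k} entirely and derive \Ref{homog} from Theorem \ref{thm 2k+1 der} by noting that the right-hand side of \Ref{LL der} with one index summed against $z_{i_0}$ produces multiplication by $|a|\,1_z$ via \Ref{1 in A}, reducing $\sum_{i_0}z_{i_0}\partial_{i_0}$ of the $2k$-th derivatives of $L$ to $|a|$ times the $2k$-th derivatives of $Q$ (by \Ref{pp der}); but this only pins down the $2k$-th derivatives of $\sum_j z_j\partial_{z_j}L - 2kL$, leaving a degree-$<2k$ ambiguity, so the direct approach via \Ref{POT2k} is preferable as it is exact.
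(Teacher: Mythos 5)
Your proof is correct and is exactly the paper's argument: the paper's one-line proof likewise invokes the degree-one homogeneity of each $f_{i_1,\dots,i_{k+1}}$ (so the Euler operator produces $2k\,g^{2k}\log g + g^{2k}$ termwise) together with Theorem \ref{thm new} to identify the leftover polynomial part with $\frac{|a|^{2k+1}}{(2k)!}\,(1_z,1_z)_z$. Your version just spells out the bookkeeping that the paper leaves implicit.
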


\begin{proof}
The lemma follows from the fact that each $f_{i_1,\dots,i_{k+1}}$ is a homogeneous polynomial in $z_1,\dots,z_n$ of degree one and from
Theorem \ref{thm new}.
\end{proof}

\subsection{Bundle of algebras $A_z$}

  The family  of algebras $A_z$ form a vector bundle over the space of parameters $z\in \C^n-\Delta$.
  The $\Z$-module
  structures of fibers canonically trivialize the vector bundle $\pi :\sqcup_{z\in \C^n-\Delta} A_z \to \C^n-\Delta$.

 Let $s(z) \in A_z$ be a section of that bundle,
\bean
\label{pres}
  s(z) = \sum_{i_1,\dots, i_k} s_{i_1,\dots, i_k}(z) w_{i_1,\dots, i_k},
\eean
where $s_{i_1,\dots, i_k}(z)$ are scalar functions. Define the {\it combinatorial connection} on the bundle of algebras
by the rule
\bean
\label{comb conn}
\frac{\der s}{\der z_j}: = \sum_{i_1,\dots, i_k} \frac{\der s_{i_1,\dots, i_k}}{\der z_j}  w_{i_1,\dots, i_k}, \quad \forall\ j.
\eean
 This derivative does not depend on the presentation \Ref{pres}. The combinatorial connection is {\it flat}.
The combinatorial connection is compatible with the  Grothendieck bilinear  form, that is
\bea
\frac {\der}{\der z_j}\left(s_1,\ s_2\right)_z\ =\ \left(\frac{\der}{\der z_j} s_1,\ s_2\right)_z 
\ +\  \left(s_1, \frac{\der}{\der z_j} s_2\right)_z
\eea
for any $j$ and  sections $s_1,s_2$.

 For $\ka\in\C^\times$, introduce a connection $\nabla^\ka$ on the bundle of algebras $\pi$ 
 by the formula
\bea
\nabla_j^\ka : = \frac{\der}{\der z_j} \ -\ \ka\, p_j (z)*_z, \qquad j=1,\dots, n.
\eea
This family of connections is a deformation of the combinatorial connection.

\begin{thm}
 For any $\ka\in\C^\times$, the  connection $\nabla^\ka$ is flat, 
\bea
[\nabla_j^\ka ,\nabla_m^\ka ]=0, \qquad\qquad \forall\ j,m.
\eea
\end{thm}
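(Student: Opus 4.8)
The plan is to argue exactly as in Lemma~\ref{lem flat}, using the combinatorial connection \Ref{comb conn} as a flat reference connection. Write $\nabla^\ka_j=\frac\der{\der z_j}-\ka\,p_j(z)*_z$, where now $\frac\der{\der z_j}$ denotes the combinatorial connection and $p_j(z)*_z\in\End(A_z)$ is the operator of multiplication by $p_j(z)$, regarded as a matrix-valued function of $z$ in the trivialization of $\pi$ by the $\Z$-structure. Expanding the bracket, using the Leibniz rule $\bigl[\frac\der{\der z_i},\,p_j(z)*_z\bigr]=\frac\der{\der z_i}\bigl(p_j(z)*_z\bigr)$ and the flatness of the combinatorial connection, one gets
\bea
\bigl[\nabla^\ka_i,\nabla^\ka_j\bigr]\ &=&\ -\,\ka\left(\frac\der{\der z_i}\bigl(p_j(z)*_z\bigr)-\frac\der{\der z_j}\bigl(p_i(z)*_z\bigr)\right)\ +\ \ka^2\,\bigl[\,p_i(z)*_z\,,\,p_j(z)*_z\,\bigr].
\eea
So it suffices to prove two statements: (a) the multiplication operators $p_i(z)*_z$ and $p_j(z)*_z$ commute; (b) $\frac\der{\der z_i}\bigl(p_j(z)*_z\bigr)=\frac\der{\der z_j}\bigl(p_i(z)*_z\bigr)$ in $\End(A_z)$ for all $i,j$.

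Statement (a) is immediate from commutativity and associativity of $A_z$: for any $v\in A_z$ one has $p_i(z)*_z\bigl(p_j(z)*_z v\bigr)=\bigl(p_i(z)*_z p_j(z)\bigr)*_z v=\bigl(p_j(z)*_z p_i(z)\bigr)*_z v=p_j(z)*_z\bigl(p_i(z)*_z v\bigr)$. For statement (b) I would test the two operators against the spanning set $\{w_{i_1,\dots,i_k}\}$ of $A_z$; by \Ref{comb conn}, each $w_{l_1,\dots,l_k}$ (with its indices held fixed) is a flat section of the combinatorial connection. Since that connection is compatible with the Grothendieck form and $w_{m_1,\dots,m_k}$ is flat, the Leibniz rule gives
\bea
\left(\Bigl(\tfrac\der{\der z_i}\bigl(p_j(z)*_z\bigr)\Bigr)w_{l_1,\dots,l_k},\ w_{m_1,\dots,m_k}\right)_{\!z} &=& \frac\der{\der z_i}\Bigl(\bigl(p_j(z)*_z\,w_{l_1,\dots,l_k},\ w_{m_1,\dots,m_k}\bigr)_z\Bigr).
\eea
Now $w_{l_1,\dots,l_k}=d_{l_1,\dots,l_k}\,p_{l_1}(z)*_z\cdots*_z p_{l_k}(z)$, and likewise for $w_{m_1,\dots,m_k}$, so Theorem~\ref{thm 2k+1 der} yields
\bea
\bigl(p_j(z)*_z\,w_{l_1,\dots,l_k},\ w_{m_1,\dots,m_k}\bigr)_z &=& d_{l_1,\dots,l_k}\,d_{m_1,\dots,m_k}\,\frac{\der^{2k+1}L}{\der z_j\,\der z_{l_1}\cdots\der z_{l_k}\,\der z_{m_1}\cdots\der z_{m_k}}.
\eea
Applying $\frac\der{\der z_i}$ produces $d_{l_1,\dots,l_k}\,d_{m_1,\dots,m_k}\,\frac{\der^{2k+2}L}{\der z_i\,\der z_j\,\der z_{l_1}\cdots\der z_{l_k}\,\der z_{m_1}\cdots\der z_{m_k}}$, which is symmetric in $i$ and $j$. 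Hence $\frac\der{\der z_i}(p_j(z)*_z)$ and $\frac\der{\der z_j}(p_i(z)*_z)$ have the same Grothendieck pairing with every $w_{m_1,\dots,m_k}$ after being applied to every $w_{l_1,\dots,l_k}$; since the $w$'s span $A_z$ and the Grothendieck form is nondegenerate ($A_z$ being a Frobenius algebra), statement (b) follows, and with it the theorem.

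The main obstacle is statement (b): the crux is to legitimately carry the scalar derivative $\frac\der{\der z_i}$ through the Grothendieck pairing — which works precisely because the combinatorial connection is metric for that form and the chosen frame is flat — and then to recognize the resulting $(2k+2)$-nd derivative of the potential $L$ via the $(2k+1)$-st derivative identity of Theorem~\ref{thm 2k+1 der}. Statement (a) and the commutator bookkeeping of the first paragraph are routine.
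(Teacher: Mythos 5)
Your argument is correct, but it is genuinely different from the paper's: the paper disposes of this theorem by citing Theorems 5.1 and 5.9 of \cite{V3}, whereas you give a self-contained derivation inside the present framework. Your decomposition of $[\nabla^\ka_i,\nabla^\ka_j]$ into the commutator term and the ``curl'' term is the standard one (it is the same splitting used in the abstract Lemma \ref{lem flat}); part (a) is indeed immediate from commutativity and associativity of $A_z$; and part (b) is handled correctly: the $w_{l_1,\dots,l_k}$ are by definition flat for the combinatorial connection, the combinatorial connection is compatible with the Grothendieck form, so the scalar derivative passes onto the operator, and Theorem \ref{thm 2k+1 der} identifies $\bigl(p_j(z)*_z w_{l},w_{m}\bigr)_z$ with a constant multiple of a $(2k{+}1)$-st derivative of $L$, whose further $z_i$-derivative is symmetric in $i,j$; nondegeneracy of the form (the Frobenius property) and the fact that the $w$'s span $A_z$ then finish the job. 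What your route buys is a proof that the flatness of $\nabla^\ka$ in the arrangement example is a formal consequence of the potential representation of the structure constants together with the constancy of the Grothendieck form in the $w$-frame --- i.e.\ exactly the mechanism behind Lemma \ref{lem flat}, now with its hypotheses verified concretely. What it does not buy is independence from external input: you still rely on Theorem \ref{thm 2k+1 der} (Theorem 6.20 of \cite{V4}) and on the stated flatness and metric-compatibility of the combinatorial connection, which the paper also imports without proof; so the two approaches differ mainly in which prior results they lean on, not in overall self-containedness.
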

\begin{proof} This statement is a corollary of Theorems 5.1 and 5.9 in \cite{V3}.
\end{proof}

\subsection{Flat sections} Given $\ka\in\C^\times$, the flat sections
\bean
\label{section s}
s(z) = \sum_{1\leq i_1<\dots < i_k\leq n} s_{i_1,\dots, i_k}(z) w_{i_1,\dots, i_k}\
\in A_z
\eean
of the connection $\nabla^\ka$  are given by the following construction.
 
 The function $e^{\ka\Phi(t,z)} = (\prod_{i=1}^n f_i(t,z)^{a_i})^\ka$ defines a rank one local system $\mc L_\ka$ on 
 $U(\A(z)):=\C^k-\cup_{i=1}^n H_i(z)$  the complement to the arrangement $\A(z)$,
  whose horizontal sections over open subsets of  $U(z)$ are uni-valued branches of $e^{\ka\Phi(t,z)} $ 
 multiplied by complex numbers, see, for example, \cite{V2}.
 The vector bundle
\bean
\sqcup_{x\in \C^n-\Delta}\,H_k(U(\A(x)), \mc L_\kappa\vert_{U(\A(x))})
 \to  \C^n-\Delta,
 \eean
called the {\it homology bundle}, has the canonical  flat {\it Gauss-Manin connection}.

\begin{thm}
\label{thm hyper integrals}
Assume that $\ka$ is such that $\ka |a|\notin \Z_{\leq 0}$ and $\ka a_j\notin \Z_{\leq 0}$ for $j=1,\dots,n$,
then for every flat section $s(z)$, there  is a locally flat section
of the homology bundle  $\ga(z)\in H_k(U(\A(x))$, such that the coefficients $s_{i_1,\dots, i_k}(z)$ in \Ref{section s}
are given by the following formula:
\bean
\label{coeff of sec s}
s_{i_1,\dots, i_k}(z) = \int_{\ga(z)}e^{\ka\Phi(t,z)} d\log f_{i_1}\wedge\dots\wedge  d\log f_{i_k}, 
\qquad
\forall \ 
1\leq i_1<\dots < i_k\leq n .
\eean
\end{thm}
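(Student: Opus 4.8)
The plan is to realize the flat sections of $\nabla^\ka$ as hypergeometric integrals over the homology bundle and then verify, by direct differentiation, that the integrals solve the flat-section equations. First I would recall from the theory of arrangements (as in \cite{SV, V2, V3}) that, under the non-resonance conditions $\ka|a|\notin\Z_{\le 0}$ and $\ka a_j\notin\Z_{\le 0}$, the map
\bea
\ga(z)\ \longmapsto\ s(z):=\sum_{1\le i_1<\dots<i_k\le n}\Big(\int_{\ga(z)}e^{\ka\Phi(t,z)}\,d\log f_{i_1}\wedge\dots\wedge d\log f_{i_k}\Big)\,w_{i_1,\dots,i_k}
\eea
is an isomorphism from the space of locally flat sections of the homology bundle to a space of the same dimension ${n-1\choose k}=\dim A_z$. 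Because the homology bundle carries the flat Gauss-Manin connection, its locally flat sections are exactly those $\ga(z)$ that do not depend on $z$ once a local trivialization is fixed; so it suffices to show that for each fixed locally flat $\ga(z)$ the resulting $s(z)$ is a flat section of $\nabla^\ka$, and conversely that every flat section of $\nabla^\ka$ arises this way. The converse is then a dimension count: both spaces of solutions have dimension ${n-1\choose k}$ and the integral map is injective, the injectivity being the standard non-degeneracy of the period pairing between homology with coefficients in $\mc L_\ka$ and the ``flag'' de Rham complex, valid precisely in the stated non-resonant range.

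The core computation is to check $\nabla^\ka_j s(z)=0$, i.e.\ that $\tfrac{\der s}{\der z_j}=\ka\,p_j(z)*_z s(z)$, where on the left $\tfrac{\der}{\der z_j}$ is the combinatorial connection \Ref{comb conn}, so acts only on the scalar coefficients $s_{i_1,\dots,i_k}$. Differentiating under the integral sign, using $\tfrac{\der \Phi}{\der z_j}=a_j/f_j$ and the fact that $\ga(z)$ is Gauss-Manin flat (so its $z$-derivative contributes nothing to the integral), one gets
\bea
\frac{\der}{\der z_j}\int_{\ga(z)}e^{\ka\Phi}\,d\log f_{i_1}\wedge\dots\wedge d\log f_{i_k}
=\ka\int_{\ga(z)}e^{\ka\Phi}\,\frac{a_j}{f_j}\,d\log f_{i_1}\wedge\dots\wedge d\log f_{i_k},
\eea
using that $d\log f_{i_l}$ does not depend on $z$ (the hyperplanes move parallelly, so $df_i=\sum_j b^j_i\,dt_j$ is $z$-independent and $d\log f_i=df_i/f_i$ has $z$-dependence only through $f_i$ in the denominator — one must be slightly careful here and include the term coming from $\der_{z_j}(1/f_{i_l})$ when $i_l=j$; this is absorbed into the local-system identities below). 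Then I would invoke the combinatorial identity from \cite{V4} expressing the class $[a_j/f_j]\cdot[d\log f_{i_1}\wedge\dots\wedge d\log f_{i_k}]$ in $H_k$ in terms of the $w$-basis — this is exactly the content of the multiplication formulas \Ref{mult in A}, \Ref{mault in A 2} for $p_j(z)*_z w_{i_2,\dots,i_{k+1}}$, transported through the isomorphism $\al$ of Theorem 2.7 in \cite{V4} that identifies $A_z$ with the top cohomology of the Aomoto complex and $p_j(z)$ with the class of $a_j\,d\log f_j$. Comparing the resulting coefficient of each $w_{i_1,\dots,i_k}$ on both sides yields precisely $\tfrac{\der s}{\der z_j}=\ka\,p_j(z)*_z s(z)$.

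The main obstacle, and the step needing the most care, is the identification of the hypergeometric $k$-form integrals $\int_{\ga(z)}e^{\ka\Phi}\,d\log f_{i_1}\wedge\dots\wedge d\log f_{i_k}$ with the flat sections written in the $w$-basis: one must pin down the correspondence between the ``flag'' differential forms $d\log f_{i_1}\wedge\dots\wedge d\log f_{i_k}$ and the basis elements $w_{i_1,\dots,i_k}\in A_z$ so that the Gauss-Manin connection on periods matches the connection $\nabla^\ka$ written via the combinatorial connection plus $\ka\,p_j*_z$. This is where Theorems 5.1 and 5.9 of \cite{V3} (the flatness and identification of the combinatorial/Gauss-Manin connections) together with the algebra isomorphism of \cite{V4} do the work: I would state the correspondence $w_{i_1,\dots,i_k}\leftrightarrow d_{i_1,\dots,i_k}\,d\log f_{i_1}\wedge\dots\wedge d\log f_{i_k}$ (up to the normalization fixed in \cite{V4}), check it respects the linear relations \Ref{IS} versus the Orlik--Solomon/Aomoto relations, and then the non-resonance hypothesis guarantees that the de Rham complex computing $H^k$ has cohomology concentrated in top degree with the period pairing non-degenerate, so that every flat section of $\nabla^\ka$ is represented by such an integral. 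Once this dictionary is in place, the theorem follows by combining it with the differentiation-under-the-integral computation above.
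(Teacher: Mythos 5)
The paper's own proof is a two-line citation (Lemma 5.7 of \cite{V3} for the integral representation, Theorem 1.1 of \cite{V1} for the role of the non-resonance conditions), and your proposal reconstructs essentially that argument: differentiation under the integral sign to show the period vectors solve $\nabla^\ka s=0$, identification of the $w$-basis with the logarithmic forms, and then injectivity of the period map plus the dimension count $\dim H_k(U(\A(z)),\mc L_\ka)={n-1\choose k}$ under non-resonance to get every flat section. The one step you wave at --- the extra term from $\der_{z_j}\bigl(d\log f_{i_l}\bigr)$ when $j=i_l$ --- is real and is only ``absorbed'' because the identity $\der_{z_j}s=\ka\,p_j(z)*_z s$ holds for twisted cohomology classes (that term is a twisted-exact correction), which is exactly what the cited Lemma 5.7 of \cite{V3} supplies; with that understood, your outline is sound and matches the intended proof.
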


\begin{proof} This is Lemma 5.7 in \cite{V3}. The information on $\ka$ see in Theorem 1.1 in \cite{V1}.
\end{proof}

\smallskip

Theorem \ref{thm hyper integrals} identifies  the connection $\nabla^\ka$ with the Gauss-Manin connection
on the homology bundle. 
The flat sections are labeled by the locally constant cycles $\ga(z)$ in the complement to our arrangement.
The space of flat sections is identified with the degree $k$  homology group of the complement to the arrangement
$\A(z)$.
The monodromy of the flat sections is the monodromy of that  homology group.

\subsection{Special case}
Assume that $a_j=1$ for all $j\in [1,\dots,n]$. Then $|a|=n$.
Denote
\bean
\label{def g = 1}
g(z_1,\dots,z_n,\ka)
&=&
 \ka^{-k} \int_{\ga(z)}e^{\ka\Phi(t,z)} dt_1\wedge\dots\wedge  dt_{k} =
\\
\notag
&=&
\ka^{-k}
\int_{\ga(z)}\left(\prod_{j=1}^n f_j(t,z)\right)^\ka dt_1\wedge\dots\wedge  dt_{k}.
\eean
\begin{lem}
\label{lem lift}
If  $a_j=1$ for all $j\in [1,\dots,n]$, then the flat section of Theorem \ref{thm hyper integrals} can be written in the form
\bean
\label{spec sol example}
s(z)=\sum_{1\leq i_1<\dots<i_k\leq n} \frac {\der^k g(z,\ka)}{\der z_{i_1}\dots\der z_{i_k}} \,d^2_{i_1,\dots,i_k}P_{i_1,\dots,i_k},
\eean
cf. \Ref{spec sol}.
\qed
\end{lem}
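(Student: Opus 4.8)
The plan is to compute the right-hand side of \Ref{spec sol example} explicitly using the integral representation \Ref{coeff of sec s} and show it equals the flat section produced in Theorem \ref{thm hyper integrals}. First I would recall that in the special case $a_j=1$ the coefficients of the flat section in the basis $\{w_{i_1,\dots,i_k}\}$ are
\bea
s_{i_1,\dots,i_k}(z)=\int_{\ga(z)}e^{\ka\Phi(t,z)}\,d\log f_{i_1}\wedge\dots\wedge d\log f_{i_k},
\eea
and by the definition $w_{i_1,\dots,i_k}=d_{i_1,\dots,i_k}P_{i_1,\dots,i_k}$, so that
\bea
s(z)=\sum_{1\le i_1<\dots<i_k\le n} s_{i_1,\dots,i_k}(z)\,d_{i_1,\dots,i_k}\,P_{i_1,\dots,i_k}.
\eea
Comparing with \Ref{spec sol example}, the claim reduces to the pointwise identity of integrands
\bea
\frac{\der^k g(z,\ka)}{\der z_{i_1}\dots\der z_{i_k}}\;=\;d_{i_1,\dots,i_k}\int_{\ga(z)}e^{\ka\Phi(t,z)}\,d\log f_{i_1}\wedge\dots\wedge d\log f_{i_k}
\eea
for every $1\le i_1<\dots<i_k\le n$ (both sides being, a priori, obtained by differentiating integrals over the same locally constant cycle).

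Next I would carry out the differentiation. Since $\frac{\der f_j}{\der z_i}=\delta_{i,j}$ and $\frac{\der\Phi}{\der z_i}=a_i/f_i=1/f_i$ in this case, differentiating $g=\ka^{-k}\int_{\ga(z)}e^{\ka\Phi}\,dt_1\wedge\dots\wedge dt_k$ once in $z_{i}$ brings down a factor $\ka\cdot(1/f_{i})$ and cancels one power of $\ka^{-1}$; iterating $k$ times with distinct indices $i_1,\dots,i_k$ gives
\bea
\frac{\der^k g}{\der z_{i_1}\dots\der z_{i_k}}
=\int_{\ga(z)}e^{\ka\Phi(t,z)}\,\frac{dt_1\wedge\dots\wedge dt_k}{f_{i_1}(t,z)\cdots f_{i_k}(t,z)}.
\eea
On the other hand $d\log f_{i_l}=\sum_{j=1}^k b^j_{i_l}\,dt_j/f_{i_l}$, so
\bea
d\log f_{i_1}\wedge\dots\wedge d\log f_{i_k}
=\frac{\det(b^j_{i_l})_{j,l=1}^k}{f_{i_1}\cdots f_{i_k}}\,dt_1\wedge\dots\wedge dt_k
=\frac{d_{i_1,\dots,i_k}}{f_{i_1}\cdots f_{i_k}}\,dt_1\wedge\dots\wedge dt_k,
\eea
which is exactly $d_{i_1,\dots,i_k}$ times the integrand just obtained. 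This establishes the required identity and hence \Ref{spec sol example}.

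The one genuinely delicate point — the main obstacle — is not the formal differentiation but justifying it: one must check that differentiation under the integral sign over the locally constant family of cycles $\ga(z)$ is legitimate, i.e. that the Gauss–Manin/combinatorial flat structure lets us treat $\ga(z)$ as "constant" when computing $\der/\der z_i$, and that the convergence/pairing conditions on $\ka$ from Theorem \ref{thm hyper integrals} (namely $\ka|a|\notin\Z_{\le 0}$ and $\ka a_j\notin\Z_{\le 0}$) are in force throughout. This is handled by appealing to Theorem \ref{thm hyper integrals} itself, which already identifies the coefficients $s_{i_1,\dots,i_k}(z)$ with such integrals over a locally flat section of the homology bundle; the compatibility of $\der/\der z_i$ with that structure is precisely the flatness of the combinatorial connection and the Gauss–Manin identification recorded just before the Special case subsection. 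Granting that, the computation above is the whole proof; I would also remark that the nonuniqueness of $g$ (Lemma statement: a polynomial of degree $<k$ may be added) is consistent, since such a polynomial is killed by every $\der^k/\der z_{i_1}\dots\der z_{i_k}$.
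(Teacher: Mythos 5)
Your computation is correct and is exactly the (unwritten) argument behind the paper's q.e.d.: differentiating $g=\ka^{-k}\int_{\ga(z)}e^{\ka\Phi}\,dt_1\wedge\dots\wedge dt_k$ under the integral sign over the locally flat cycle gives $\frac{\der^k g}{\der z_{i_1}\dots\der z_{i_k}}=\int_{\ga(z)}e^{\ka\Phi}\,\frac{dt_1\wedge\dots\wedge dt_k}{f_{i_1}\cdots f_{i_k}}$, while $d\log f_{i_1}\wedge\dots\wedge d\log f_{i_k}=d_{i_1,\dots,i_k}\,\frac{dt_1\wedge\dots\wedge dt_k}{f_{i_1}\cdots f_{i_k}}$, so $s_{i_1,\dots,i_k}=d_{i_1,\dots,i_k}\,\frac{\der^k g}{\der z_{i_1}\dots\der z_{i_k}}$ and the claim follows from $w_{i_1,\dots,i_k}=d_{i_1,\dots,i_k}P_{i_1,\dots,i_k}$. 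One bookkeeping slip to fix: your displayed ``reduction'' identity places $d_{i_1,\dots,i_k}$ on the wrong side (it asserts $\frac{\der^k g}{\der z_{i_1}\dots\der z_{i_k}}=d_{i_1,\dots,i_k}\,s_{i_1,\dots,i_k}$, which is off by a factor $d_{i_1,\dots,i_k}^2$ from what matching coefficients of $P_{i_1,\dots,i_k}$ actually requires, namely $s_{i_1,\dots,i_k}=d_{i_1,\dots,i_k}\,\frac{\der^k g}{\der z_{i_1}\dots\der z_{i_k}}$), but the computation you then carry out proves the correct relation, so the argument stands.
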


\begin{cor}
\label{cor all sol special} If  $a_j=1$ for all $j\in [1,\dots,n]$ and  $\ka \notin\frac1n \Z_{\leq 0}$, then
 all flat sections of the connection $\nabla^\ka$ have the form \Ref{def g = 1},  \Ref{spec sol example}.

\end{cor}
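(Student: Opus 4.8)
The plan is to combine the dimension count for the space of flat sections with the already-established liftability statement of Lemma \ref{lem lift}. First I would recall that, by Theorem \ref{thm hyper integrals}, under the hypothesis $\ka|a|=\ka n\notin\Z_{\leq 0}$ and $\ka a_j=\ka\notin\Z_{\leq 0}$ (both of which are implied by $\ka\notin\frac1n\Z_{\leq 0}$, since $\frac1n\Z_{\leq 0}\supset\Z_{\leq 0}$), every flat section $s(z)$ of $\nabla^\ka$ arises from a locally flat section $\ga(z)$ of the homology bundle via formula \Ref{coeff of sec s}; conversely every cycle gives a flat section, so the space of flat sections is identified with $H_k(U(\A(z)),\mc L_\ka)$. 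In particular, \emph{all} flat sections are obtained this way. Then Lemma \ref{lem lift} applies to each such section individually: with $a_j=1$ for all $j$, the section produced from $\ga(z)$ can be rewritten in the liftable form \Ref{spec sol example} with $g(z,\ka)$ given by \Ref{def g = 1}. Chaining these two facts gives the claim: every flat section of $\nabla^\ka$ has the stated form.

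Concretely, the argument is: let $s(z)$ be an arbitrary flat section of $\nabla^\ka$. By Theorem \ref{thm hyper integrals} (whose hypotheses hold because $\ka\notin\frac1n\Z_{\leq 0}$ forces $\ka n\notin\Z_{\leq 0}$ and $\ka\notin\Z_{\leq 0}$), there is a locally flat $\ga(z)\in H_k(U(\A(z)),\mc L_\ka)$ with coefficients $s_{i_1,\dots,i_k}(z)=\int_{\ga(z)}e^{\ka\Phi(t,z)}d\log f_{i_1}\wedge\dots\wedge d\log f_{i_k}$. By Lemma \ref{lem lift}, since $a_j=1$ for all $j$, this same section equals
\[
s(z)=\sum_{1\leq i_1<\dots<i_k\leq n}\frac{\der^k g(z,\ka)}{\der z_{i_1}\dots\der z_{i_k}}\,d^2_{i_1,\dots,i_k}P_{i_1,\dots,i_k},
\]
with $g(z,\ka)=\ka^{-k}\int_{\ga(z)}\bigl(\prod_{j=1}^n f_j(t,z)\bigr)^\ka\,dt_1\wedge\dots\wedge dt_k$, i.e.\ precisely the form \Ref{def g = 1}, \Ref{spec sol example}. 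As $s$ was arbitrary, this proves the corollary.

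The only point requiring care—and the main (mild) obstacle—is verifying that the hypothesis $\ka\notin\frac1n\Z_{\leq 0}$ is exactly strong enough to trigger Theorem \ref{thm hyper integrals}, and that under this hypothesis the correspondence $\ga(z)\mapsto s(z)$ is in fact \emph{surjective} onto flat sections (not merely well-defined), so that "every flat section" is covered. This surjectivity is the content of the identification, stated after Theorem \ref{thm hyper integrals}, of the space of flat sections of $\nabla^\ka$ with the degree-$k$ homology of the complement; I would simply invoke that identification. No new computation is needed beyond unwinding $a_j=1\Rightarrow|a|=n$ and the inclusion $\Z_{\leq 0}\subset\frac1n\Z_{\leq 0}$.
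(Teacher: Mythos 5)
Your proof is correct and follows exactly the route the paper intends: the corollary is stated without a separate proof precisely because it is the concatenation of Theorem \ref{thm hyper integrals} (every flat section comes from a cycle $\ga(z)$, the hypotheses $\ka n\notin\Z_{\leq 0}$ and $\ka\notin\Z_{\leq 0}$ both being consequences of $\ka\notin\frac1n\Z_{\leq 0}$) and Lemma \ref{lem lift} (each such section is liftable via \Ref{def g = 1}). Your care about surjectivity of $\ga(z)\mapsto s(z)$ is appropriate but already built into the statement of Theorem \ref{thm hyper integrals}, which is phrased for an arbitrary flat section.
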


\begin{thm}
\label{thm vector e}
Assume that $a_j=1$ for all $j\in [1,\dots,n]$, then 
 there exists a differential operator 
\bean
e=\sum_{1\leq i_1<\dots<i_k\leq n} e_{i_1,\dots,i_k}(z)\frac{\der^k}{\der z_{i_1}\dots\der z_{i_k}}
\eean
such that the operator
$g(z) \mapsto (e g)(z)$ acts as the shift $\ka\mapsto \ka-1$ on the space of all flat sections \Ref{def g = 1}, \Ref{spec sol example}.
Moreover, we have
\bean
\tilde e:= \sum_{1\leq i_1<\dots<i_k\leq n} e_{i_1,\dots,i_k}(z)P_{i_1,\dots,i_k} = \prod_{j=1}^n \Big[\frac1{f_j}\Big]\in A_z,
\eean
and the element $\tilde e$ is invertible in $A_z$.

\end{thm}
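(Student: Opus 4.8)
The plan is to construct $e$ via a partial-fraction decomposition of the rational function $1/(f_1\cdots f_n)$ and then to extract both assertions from the integral representation of flat sections in Theorem~\ref{thm hyper integrals}, together with Corollary~\ref{cor all sol special} and Lemma~\ref{lem lift}.

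\emph{Construction of the coefficients.} For a $k$-element subset $\al=\{i_1,\dots,i_k\}\subset[1,\dots,n]$ let $t^\al(z)\in\C^k$ be the common point of the hyperplanes $H_{i_1}(z),\dots,H_{i_k}(z)$: it exists and is unique because $d_{i_1,\dots,i_k}\ne0$, it depends polynomially on $z$ by Cramer's rule, and for $z\in\C^n-\Delta$ one has $f_m(t^\al(z),z)\ne0$ whenever $m\notin\al$, since $\A(z)$ has normal crossings. Set $F_\al(t)=\prod_{m\notin\al}f_m(t,z)$, a polynomial of degree $n-k$ in $t_1,\dots,t_k$; there are $\binom nk$ of them. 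If $\al\ne\beta$, choose $m\in\beta\setminus\al$ (nonempty, as $|\al|=|\beta|=k$); then $f_m$ is a factor of $F_\al$ and vanishes at $t^\beta(z)$, so $F_\al(t^\beta(z))=0$, whereas $F_\al(t^\al(z))\ne0$. Thus the matrix $(F_\al(t^\beta(z)))_{\al,\beta}$ is diagonal with nonzero diagonal, so the $F_\al$ are linearly independent; since their number $\binom nk$ equals $\binom{(n-k)+k}{k}$, the dimension of the space of polynomials of degree $\le n-k$ in $k$ variables, they form a basis of that space. Hence $1=\sum_\al e_\al(z)F_\al(t)$ for unique scalars $e_\al(z)$, and evaluating at $t=t^\al(z)$ gives $e_\al(z)=1/\prod_{m\notin\al}f_m(t^\al(z),z)$; as $f_m(t^\al(z),z)$ equals, up to a nonzero constant, the linear form $f_{\al\cup\{m\}}$ of \Ref{f k+1}, these $e_\al(z)$ are rational in $z$, regular and nowhere zero on $\C^n-\Delta$. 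Dividing $1=\sum_\al e_\al(z)F_\al(t)$ by $f_1\cdots f_n$ gives the identity of rational functions of $t$
\beq
\label{partial fractions}
\sum_{1\le i_1<\dots<i_k\le n}\frac{e_{i_1,\dots,i_k}(z)}{f_{i_1}(t,z)\cdots f_{i_k}(t,z)}\ =\ \frac1{f_1(t,z)\cdots f_n(t,z)}\,.
\eeq

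\emph{The shift property.} Put $e=\sum_{1\le i_1<\dots<i_k\le n}e_{i_1,\dots,i_k}(z)\,\frac{\der^k}{\der z_{i_1}\dots\der z_{i_k}}$. Because $a_j=1$ for all $j$, formula \Ref{arr p} gives $\der\Phi/\der z_i=1/f_i$ and $\der^2\Phi/\der z_i\der z_j=0$ for $i\ne j$, so for distinct indices $\frac{\der^k}{\der z_{i_1}\dots\der z_{i_k}}e^{\ka\Phi}=\ka^k(f_{i_1}\cdots f_{i_k})^{-1}e^{\ka\Phi}$. Therefore, for $g(z,\ka)=\ka^{-k}\int_{\ga(z)}e^{\ka\Phi}\,dt_1\wedge\dots\wedge dt_k$ as in \Ref{def g = 1} and any cycle $\ga(z)$, using \Ref{partial fractions} and $e^{\ka\Phi}/(f_1\cdots f_n)=e^{(\ka-1)\Phi}$ (again $a_j=1$),
\bea
(eg)(z,\ka)&=&\int_{\ga(z)}e^{\ka\Phi(t,z)}\Bigl(\,\sum_{1\le i_1<\dots<i_k\le n}\frac{e_{i_1,\dots,i_k}(z)}{f_{i_1}\cdots f_{i_k}}\,\Bigr)\,dt_1\wedge\dots\wedge dt_k
\\
&=&\int_{\ga(z)}e^{(\ka-1)\Phi(t,z)}\,dt_1\wedge\dots\wedge dt_k\ =\ (\ka-1)^k\,g(z,\ka-1)\,.
\eea
By Corollary~\ref{cor all sol special}, for generic $\ka$ every flat section of $\nabla^\ka$ is the section \Ref{spec sol example} built from such a $g(z,\ka)$; by Lemma~\ref{lem lift} at level $\ka-1$, the function $g(z,\ka-1)$ — obtained by continuing the cycle $\ga(z)$, which is legitimate for generic $\ka$ — builds a flat section of $\nabla^{\ka-1}$. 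Thus $g\mapsto eg$ sends the solution of \Ref{ka flat}, \Ref{spec sol} attached to a cycle $\ga$ at level $\ka$ to the nonzero multiple $(\ka-1)^k$ of the solution attached to $\ga$ at level $\ka-1$, i.e.\ $e$ acts as the shift $\ka\mapsto\ka-1$. Being a $k$-th order operator with no lower-order term, $e$ annihilates every polynomial in $z$ of degree $<k$.

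\emph{The element $\tilde e$ and the obstacle.} By \Ref{arr p}, $p_j(z)=[1/f_j]$ in $A_z$, hence $P_{i_1,\dots,i_k}=p_{i_1}(z)*_z\dots*_zp_{i_k}(z)=[1/(f_{i_1}\cdots f_{i_k})]$; combining this with \Ref{partial fractions},
\bea
\tilde e&=&\sum_{1\le i_1<\dots<i_k\le n}e_{i_1,\dots,i_k}(z)\,P_{i_1,\dots,i_k}\ =\ \Bigl[\,\sum_{1\le i_1<\dots<i_k\le n}\frac{e_{i_1,\dots,i_k}(z)}{f_{i_1}\cdots f_{i_k}}\,\Bigr]
\\
&=&\Bigl[\,\frac1{f_1(t,z)\cdots f_n(t,z)}\,\Bigr]\ =\ \prod_{j=1}^n\Bigl[\frac1{f_j}\Bigr]\ \in\ A_z\,.
\eea
Each $[1/f_j]$ is invertible in $A_z$ with inverse $[f_j]$, since $1/f_j$ and $f_j$ are both regular on $\C^k-\cup_{i=1}^nH_i(z)$ and multiply to $1$; hence $\tilde e=\prod_{j=1}^np_j(z)$ is invertible, with inverse $\prod_{j=1}^n[f_j]$. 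The main obstacle is the linear-algebra step above — showing that the $\binom nk$ products $\prod_{m\notin\al}f_m$ span all polynomials of degree $\le n-k$ in $t_1,\dots,t_k$; the evaluation-at-intersection-points argument reduces it to the normal-crossings hypothesis and the count $\binom nk=\binom{(n-k)+k}{k}$. A secondary point is the passage of the cycle $\ga(z)$ from level $\ka$ to level $\ka-1$, which is valid for $\ka$ outside a finite union of arithmetic progressions — exactly the genericity already built into the lifting axiom.
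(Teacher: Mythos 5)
Your proof is correct and follows essentially the same route as the paper: both reduce the shift property and the formula for $\tilde e$ to the partial-fraction identity $1/\prod_{j}f_j=\sum_{\al}e_\al(z)/\prod_{i\in\al}f_i$. The only difference is that the paper merely asserts this identity, writing the coefficients as iterated residues, whereas you actually prove it by showing the products $\prod_{m\notin\al}f_m$ form a basis of the polynomials of degree $\le n-k$ via evaluation at the intersection points $t^\al(z)$ --- a worthwhile detail, consistent with the paper's residue formula.
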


\begin{proof}
We have
\bea
g(z_1,\dots,z_n,\ka-1)=  
\int_{\ga(z)}\left(\prod_{j=1}^n f_j(t,z)\right)^\ka \frac 1 {\prod_{j=1}^n f_j(t,z)}
dt_1\wedge\dots\wedge  dt_{k}.
\eea
Hence it is enough to find the functions $e_{i_1,\dots,i_k}(z)$ such that
\bean
\label{coeff e}
 \frac 1 {\prod_{j=1}^n f_j(t,z)} = \sum_{1\leq i_1<\dots<i_k\leq n} e_{i_1,\dots,i_k}(z)\frac{1}
 {f_{i_1}(t,z)\dots f_{i_k}(t,z)}.
 \eean
Hence, for every  $z$, the coefficients must be of the form
\bean
\label{formula for a}
e_{i_1,\dots,i_k}  =  d_{i_1,\dots,i_k} \on{Res}_{f_{i_1}(t,z)=0}
\dots \on{Res}_{f_{i_k}(t,z)=0} \left(\frac {dt_1\wedge\dots\wedge dt_k} {\prod_{j=1}^n f_j(t,z)}
\right).
\eean
Conversely, if we choose such coefficients, then formula \Ref{coeff e} holds. Clearly, the element
$\tilde e= \prod_{j=1}^n \big[\frac1{f_j}\big]$ is invertible in $ A_z$.

\end{proof}

\subsection{Summary}
\label{sec summary}
Fix $a_1,\dots,a_n\in\C^\times$, such that $|a|\ne 1$.
Then the matrix $(b^i_j)$, algebra $A_z$, bilinear form $(\,,\,)_z$, vectors $p_j(z)$, vectors $P_{i_1,\dots,i_k}$,
element $1_z$, function $L(z_1,\dots,z_n)$ of Sections \ref{arr}-\ref{sec potential function} satisfy 
 axioms \ref{Inv ax}, \ref{unit ax}, \ref{Comm}, \ref{pP ax}, \ref{ass ax}, \ref{Homogeneity axiom}.

If in addition we assume that $a_j=1$ for all $j\in[1,\dots,n]$, then these objects also satisfy axioms \ref{Lifting axiom} and
\ref{Axiom e}.

\bigskip


\begin{thebibliography}{[COGP]}
\normalsize
\frenchspacing
\raggedbottom

\bibitem[D1]{D1}
B.\,Dubrovin, {\it Geometry of 2D topological field theories}, Integrable Systems and Quantum Groups, ed.
Francaviglia, M. and Greco, S.. Springer Lecture Notes in Mathematics, {\bf 1620}, 120--348

\bi [D2]{D2} B.\,Dubrovin,
{\it On almost duality for Frobenius manifolds},  (English summary) Geometry, topology, and mathematical physics, 75--132,
Amer. Math. Soc. Transl. Ser. 2, 212, Amer. Math. Soc., Providence, RI, 2004

\bibitem[FV]{FV}
M.\,Feigin, A.\,Veselov, {\it
Logarithmic Frobenius structures and Coxeter discriminants},
Adv. Math. {\bf 212} (2007), no. 1, 143--162

\bibitem[M]{M}
Y.I.\,Manin, {\it Frobenius manifolds, quantum cohomology, and moduli spaces}, American Mathematical Society Colloquium
Publications, vol. {\bf 47}, AMS, Providence, RI, 1999

\bi[MS]{MS} Yu.I.\,Manin, V.\,Schechtman, {\it Arrangements of hyperplanes, higher braid groups 
and higher Bruhat orders}, Algebraic number theory, 289--308, Adv. Stud. Pure Math., 17, Academic Press, 
Boston, MA, 1989

\bi[OS]{OS}  P.\,Orlik, L.\,Solomon, {\it  Combinatorics and topology of complements of hyperplanes},
Invent. Math. 56 (1980), no. 2, 167--189

\bibitem[SV]{SV}
 V.\,Schechtman and  A.\,Varchenko, {\it
 Arrangements of Hyperplanes and Lie Algebra Homology},
  Invent. Math.  Vol. 106 (1991), 139--194

\bibitem[St]{St}
I.A.B.\,Strachan, {\it
 Frobenius manifolds: natural submanifolds and induced bi-Hamiltonian structures},
  Differential Geom. Appl. {\bf 20} (2004), no. 1, 67--99


\bi[V1]{V1} A.\,Varchenko, {\it
Beta-Function of Euler, Vandermonde Determinant, Legendre Equation and
Critical Values of Linear Functions of Configuration of Hyperplanes},
 I. Izv. Akademii Nauk USSR, Seriya Mat., 53:6 (1989), 1206--1235

\bi[V2]{V2}  A.\,Varchenko, {\it
Multidimensional Hypergeometric Functions and Representation Theory of Lie Algebras and Quantum Groups},
 Advanced Series in Mathematical Physics, Vol. 21, World Scientific, 1995


\bi[V3]{V3} A.\,Varchenko, {\it
Quantum Integrable Model of an Arrangement of Hyperplanes, }
  SIGMA 7 (2011), 032, 55 pages; arXiv:1001.4553


\bi[V4]{V4}
A.\,Varchenko, {\it Arrangements and Frobenius like structures},
Annales de la faculte des sciences de Toulouse Ser. 6, 24 no. 1 (2015), p. 133--204,  {\tt arXiv:1210.3802}, 1--55



\bi[V5]{V5}
 A.\,Varchenko, {\it   Critical set of the master function and characteristic variety of the associated Gauss-Manin differential equations},
Arnold Math J. (2015), 
\\
{\tt DOI 10.1007/s40598-015-0020-8}, {\tt  arXiv:1410.2438}, 1--24
         



\end{thebibliography}
\end{document}